\newcommand{\be}{\begin{equation}}
\newcommand{\ee}{\end{equation}}
\newcommand{\beq}{\begin{equation}}
\newcommand{\eeq}{\end{equation}}
\newtheorem{lemma}{Lemma}
\newtheorem{theorem}{Theorem}
\newtheorem{remark}{Remark}
\newtheorem{example}{Example}
\colorlet{GREEN}{green}
\colorlet{RED}{red}
\colorlet{BLUE}{blue}
\colorlet{MAGENTA}{magenta}
\colorlet{BROWN}{brown}
\title{Controllable Sequences of Minimal Length for Discrete-Time Switched Linear Control Systems.}
\author{Paolo Mason and Antoine Girard\thanks{Universit\'e Paris-Saclay, CNRS, CentraleSup\'elec,  Laboratoire des signaux et syst\`emes, 91190, Gif-sur-Yvette, France, {\tt paolo.mason@centralesupelec.fr, antoine.girard@centralesupelec.fr}}}
\begin{document}


\maketitle

\begin{abstract}
In this paper, we provide a novel characterization of the reachable set of discrete-time switched linear control systems and a Kalman-type criterion for controllability, assuming that the switching parameter can be used as a control parameter in addition to the actual control variable. For controllable switched linear control systems it turns out that there always exists a switching sequence such that the reachable set of the corresponding linear time-variant system covers the whole state space after a sufficiently large time.
We provide estimates on the minimal time guaranteeing this property in terms of the state dimension, number of modes and rank of the control matrices, and show that such estimates are actually tight in some relevant cases.
\end{abstract}

%
%

\section{Introduction}

In this paper, we study the controllability of discrete-time switched linear control systems in $\mathbb{R}^n$ of the form
\begin{equation} 
\label{slcs}
x_{k} = A_{i_k}x_{k-1}+ B_{i_k} u_k,\qquad k\geq 1,
\end{equation} 
with $m$ modes corresponding to the pairs $(A_i,B_i),\ i=1,\dots,m$. Namely, we focus on the analysis of controllable switching sequences $i_1,\dots,i_\ell$, that is, switching sequences of finite length $\ell\ge 1$ such that every point of $\mathbb{R}^n$ can be reached at time $\ell$ starting from the origin by a solution of \eqref{slcs} corresponding to a suitable choice of $u_1,\dots,u_\ell$. Of particular interest in this work is the estimation of (tight) bounds on the length of the shortest controllable sequences.

Early results can be found in~\cite{conner1984state}, where the authors observe that the number of time steps needed to cover the entire reachable set may be larger than the dimension of the space and estimate such a number, in some particular cases, in terms of the number of modes and the dimension of the state and input variables. This work can be seen as an extension of~\cite{stanford1980controllability}, where the authors focus primarily on the structure of the reachable set of~\eqref{slcs}. Note that in~\cite{conner1984state} it is assumed that the matrices $A_i$ are invertible and that all the matrices $B_i$ have the same rank. Related results are provided in~\cite{conner1987structure}.  The problem is also considered in the short note~\cite{egerstedt2005observability} where the authors give the upper bound $n^2$ for the minimal length of controllable sequences in the particular case in which $A_i=A$ for every $i$, with $A$ possibly singular.
The minimal number of time steps for controllability is also estimated in~\cite{ji2007constructive} by explicit construction of a controllable sequence (see also \cite{liu2017reachability} for an analogous construction), with preliminary results in \cite{ge2001reachability} where it is shown that, if the matrices $A_i$ are invertible, the reachable space is given by $\sum_{i_1,\dots,i_n=1,\dots,m}^{j_1,\dots,j_n=0,\dots,n-1}A_{i_n}^{j_n}\cdots A_{i_1}^{j_1}\mathrm{Im}(B_{i_1})$, and controllability conditions are derived. 
%
We note that a similar problem has been studied in~\cite{yang2002algebraic,sun2002controllability} for  continuous-time switched  linear control systems (with some preliminary result in~\cite{sun2001reachability}). 
The more recent works \cite{zhu2021stabilizing} and \cite{zhu2024minimum} deal with the minimum number of switchings that a controllable sequence must have, both in discrete-time and continuous-time settings; in the first work the authors prove that such a number is always smaller than or equal to $\frac{n(n+1)}2-1$, while in the second work the latter bound is shown to be tight. 

Although in this work we focus on controllable sequences, the results presented in this paper can be readily transposed to observability problems using the well-known duality between controllability and observability (see e.g.~\cite{ge2001reachability}). Actually,  
this paper is in part motivated by our recent work~\cite{aazan:hal-04503485}, where an observer of a switched linear system is designed assuming the existence of observable switching sequences of a given length.

The organization of this paper and its main contributions are as follows. In Section~\ref{sec:minlen}, we derive an
upper bound on the length of the shortest controllable sequences. We show a family of systems where this bound is actually attained, showing its tightness. To prove these results, we first need to establish a novel characterization of the reachable space of a switched linear control system, which differs from that presented in~\cite{ge2001reachability}.
While in Section~\ref{sec:minlen} we present bounds that are independent of the rank of the matrices $B_i$, we provide rank-dependent bounds in Section~\ref{sec:rank}. A standing assumption in Sections~\ref{sec:minlen} and 
~\ref{sec:rank} is that all matrices $A_i$ are invertible. In Section~\ref{sec:degenerate}, we provide some discussion for the case of systems where this assumption does not hold. 
Concluding remarks can be found in Section~\ref{sec:conclusion}.

{\bf Notations:} The set of modes is denoted by $\Sigma=\{1,\dots,m\}$. The set $\Sigma^k$ indicates all sequences of $k$ values of $\Sigma$, we write them as $i_1\cdots i_k$ and we say that they have length $k$. We set $\Sigma^* = \cup_{k\in\mathbb{N}} \Sigma^k$, where $\Sigma^0 = \{\epsilon\}$, $\epsilon$ being the empty sequence. Every sequence in $\Sigma^k$ can be identified with a ``switching'' sequence for \eqref{slcs}. In particular given $\pi = i_1\cdots i_k\in \Sigma^k$ we denote by $A_{\pi}$ the evolution matrix at time $k$ for \eqref{slcs} with zero inputs, that is $A_{\pi} = A_{i_{k}}\cdots A_{i_1}$. 
Furthermore, given two sequences $\pi_1 = i_1\cdots i_{k_1}$ and $\pi_2 = j_1\cdots j_{k_2}$ we denote by $\pi_1\pi_2$ the concatenation $i_1\cdots i_{k_1} j_1 \cdots j_{k_2}$.
The image of a linear application corresponding to a matrix $A$ is denoted as $\mathrm{Im}(A)$. Given a $n\times n$ matrix $A$ and a vector space $V\subseteq \mathbb{R}^n$ we denote by $AV$ the image of the restriction of $A$  to $V$.
Given a set $S\subseteq \mathbb{R}^n$, the span of $S$ denoted $\mathrm{span}(S)$ is the smallest subspace of $\mathbb{R}^n$ containing $S$.
Let $\mathcal{I}$ be a finite index set and $V_i$ a vector subspace of $\mathbb{R}^n$ for $i\in \mathcal{I}$, then the sum of spaces $\sum_{i\in\mathcal{I}}V_i$ is the smallest subspace of $\mathbb{R}^n$ containing $V_i$ for every $i\in\mathcal{I}$, i.e. $\sum_{i\in\mathcal{I}}V_i =
\mathrm{span}(\bigcup_{i\in\mathcal{I}} V_i)$.
We indicate by $e_1,\dots,e_k$ the elements of the canonical basis of $\mathbb{R}^k$ (where $k$ is clear from the context).
In the following $\mathbb{N}$ is the set of natural numbers and $\mathbb{N}^*=\mathbb{N}\setminus\{0\}$.

\section{Controllable sequences of minimal length}
\label{sec:minlen}


We consider here switched linear control systems of the form~\eqref{slcs} assuming that the matrices $A_i$ are invertible and that not all the matrices $B_i$ are zero. We will not impose any condition on the dimension (number of columns) of the matrices $B_i$.

Given a sequence $\pi\in \Sigma^k$ for some nonnegative integer $k$, we denote by $\mathcal{R}(\pi)$ the \emph{reachable space} of \eqref{slcs} from the origin corresponding to $\pi$ (with the convention $\mathcal{R}(\epsilon)=\{0\}$). This set corresponds to the points that can be reached at time $k$ from the origin with the dynamics~\eqref{slcs} and the sequence $\pi$, for all possible control inputs $u_1,\dots,u_k$. By linearity with respect to the control inputs, $\mathcal{R}(\pi)$ is a vector space, and it is well-known that, for $\pi=i_1\cdots i_k$, it can be characterized as
\[\mathcal{R}(\pi) = \mathrm{Im}(A_{i_k}\cdots A_{i_{2}}B_{i_{1}},\ldots ,A_{i_{k}}B_{i_{k-1}},B_{i_k}).\] 
The \emph{reachable set} $\mathcal{R}$ of~\eqref{slcs} (from the origin) is the union of the reachable spaces for all possible sequences, that is, 
\begin{equation}
\label{eq:reach}
\mathcal{R}= \bigcup_{\pi\in \Sigma^*} \mathcal{R}(\pi).
\end{equation}
We will say that~\eqref{slcs} is \emph{controllable} if, for all $x\in\mathbb{R}^n$, there exists $k\in \mathbb{N}$, a switching sequence $ i_1\cdots i_k$  and control inputs $u_1,\dots, u_k$ such that $x_0=0$ and $x_k=x$, i.e., if $\mathcal{R} = \mathbb{R}^n$. Furthermore, we say that $\pi\in\Sigma^*$ is a \emph{controllable sequence} if $\mathcal{R}(\pi)=\mathbb{R}^n$.

In this section, we establish a tight upper bound on the length of the shortest controllable sequence. To derive such a bound, we first give a novel characterization of the reachable set of~\eqref{slcs}.

\subsection{Characterization of the reachable set}

Let us introduce the sequence of subspaces defined recursively for $k\in \mathbb{N}^*$ by
\begin{align}
    \label{eq:V1}
    V_1 &= \sum_{i=1}^m \mathrm{Im}(B_i), \\
    \label{eq:V2}
    V_{k+1}& = V_k + \sum_{i=1}^m A_i V_k.
\end{align}
The following result shows that the sequence reaches a fixed point in number of iterations bounded by $n$.
\smallskip

\begin{lemma} 
\label{nested}
There exists $\ell\leq n$ such that $V_k=V_{\ell}$ for every $k>\ell$.
\end{lemma}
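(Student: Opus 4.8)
The plan is to use that the recursion \eqref{eq:V2} only ever enlarges the current subspace, turn this into a monotone chain in $\mathbb{R}^n$, and then close things off with a dimension count together with a short propagation argument. So the three ingredients are: (i) the chain $V_1\subseteq V_2\subseteq\cdots$ is nested; (ii) once it becomes stationary it stays stationary; (iii) a strictly increasing nested chain of subspaces of $\mathbb{R}^n$ has length at most $n$.

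First I would record the elementary observation that $V_k\subseteq V_{k+1}$ for all $k\ge 1$: this is immediate from \eqref{eq:V2}, since $V_k$ appears as one of the summands defining $V_{k+1}$. Consequently the dimensions $\dim V_k$ form a non-decreasing sequence of integers bounded above by $n$, hence eventually constant, and since two consecutive equal dimensions of nested subspaces force the subspaces themselves to coincide, there is a well-defined least index $\ell$ with $V_{\ell+1}=V_\ell$. Next I would establish the propagation fact that from this index on nothing changes: \eqref{eq:V2} expresses $V_{j+1}$ as a function of $V_j$ alone, so $V_j=V_\ell$ forces $V_{j+1}=V_\ell+\sum_{i=1}^m A_iV_\ell=V_{\ell+1}=V_\ell$, and induction on $j\ge\ell$ gives $V_k=V_\ell$ for every $k>\ell$.

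It then remains to bound $\ell$ by $n$. By minimality of $\ell$, each inclusion in $V_1\subsetneq V_2\subsetneq\cdots\subsetneq V_\ell$ is strict, so $\dim V_{j+1}\ge \dim V_j+1$ for $j<\ell$ and hence $\dim V_\ell\ge \dim V_1+(\ell-1)$. Since not all $B_i$ are zero, $\dim V_1\ge 1$ (and in the degenerate case $V_1=\{0\}$ one reads off $V_2=\{0\}$ directly from \eqref{eq:V2}, so $\ell=1$), whence $\ell\le \dim V_\ell\le n$. Combined with the previous paragraph this yields the claim.

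I do not expect a genuine obstacle here; the argument is short. The only two points requiring care are the off-by-one in the final dimension count — which is exactly why one needs $\dim V_1\ge 1$ in order to conclude $\ell\le n$ rather than the weaker $\ell\le n+1$ — and making sure that stabilization is argued to persist for all later indices and not merely for a single step, which is the purpose of the propagation fact.
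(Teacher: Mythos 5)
Your argument is correct and is essentially the same as the paper's: both use that the chain $V_1\subseteq V_2\subseteq\cdots$ is nested and strictly increasing up to the first stationary index $\ell$, so that $\dim V_\ell\ge \ell$ (using $\dim V_1\ge 1$ from the standing assumption that not all $B_i$ vanish), hence $\ell\le n$, and that stationarity propagates because $V_{k+1}$ depends only on $V_k$. Your write-up just spells out the eventual-constancy and propagation steps in slightly more detail.
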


\begin{proof}
Let ${\ell}$ be the smallest positive integer such that $V_{\ell+1}=V_{\ell}$.
Since $V_1$ is a nonzero subspace of $\mathbb{R}^n$ and $V_{k}\subsetneq V_{k+1}$ if $1\leq k<\ell$ it follows that the dimension of $V_{\ell}$ is at least $\ell$ and therefore $\ell\leq n$. The fact that $V_k=V_{\ell}$ for every $k>\ell$ follows from the definition of the spaces~$V_k$.
\end{proof}

We can now provide a characterization of the reachable set $\mathcal{R}$ of \eqref{slcs}. 
\smallskip

\begin{theorem}
\label{main++}
Consider system~\eqref{slcs} and assume that the matrices $A_i$ are invertible for $i=1,\dots,m$.
Let $V_\ell$ be the fixed point of the sequence defined by~\eqref{eq:V1}-\eqref{eq:V2}.
Then,
\begin{align}
\mathcal{R}  &=V_\ell\nonumber\\
& = \mathrm{span}\{A_{i_{k}}\cdots A_{i_2}b_{i_1}\mid  i_1\cdots i_{k}\in \Sigma^{k},\ 
1\leq k\leq n,\ b_{i_{1}}\in \mathrm{Im}(B_{i_{1}})\}.
\label{eq:th1}
\end{align}
In particular,~\eqref{slcs} is controllable if and only if
\[\mathrm{span}\{A_{i_{k}}\cdots  A_{i_2}b_{i_1}\mid i_1\cdots i_{k}\in \Sigma^{k},\ 1\leq k\leq n,\ b_{i_{1}}\in \mathrm{Im}(B_{i_{1}})\} = \mathbb{R}^n.
\]
\end{theorem}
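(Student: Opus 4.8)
The plan is to prove the two equalities in~\eqref{eq:th1} separately, and then read off the controllability criterion as an immediate corollary. Write
\[W = \mathrm{span}\{A_{i_{k}}\cdots A_{i_2}b_{i_1}\mid i_1\cdots i_{k}\in \Sigma^{k},\ 1\leq k\leq n,\ b_{i_{1}}\in \mathrm{Im}(B_{i_{1}})\}.
\]
The three spaces to be identified are $\mathcal{R}$, $V_\ell$ and $W$. My plan is to show $W \subseteq \mathcal{R}$ (easy, by exhibiting explicit sequences and inputs), $\mathcal{R} \subseteq V_\ell$ (by bounding each $\mathcal{R}(\pi)$), and $V_\ell \subseteq W$ (the structural heart of the argument), closing the loop.

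First, $W\subseteq\mathcal{R}$: a generator $A_{i_k}\cdots A_{i_2}b_{i_1}$ with $b_{i_1}\in\mathrm{Im}(B_{i_1})$ is precisely $A_{i_k}\cdots A_{i_2}B_{i_1}u_1$ for a suitable $u_1$, hence lies in $\mathcal{R}(i_1\cdots i_k)$ by the recalled characterization of $\mathcal{R}(\pi)$ (take $u_2=\dots=u_k=0$); since $\mathcal{R}$ is a union of subspaces containing all these vectors, and $\mathcal{R}=\mathbb{R}^n$ or at least a subspace only when... — more carefully, each $\mathcal{R}(\pi)$ is a subspace, so $W$, being the span of vectors each lying in some $\mathcal{R}(\pi)\subseteq\mathcal{R}$ — wait, $\mathcal{R}$ need not be a subspace a priori. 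So instead I argue $W\subseteq V_\ell$ directly and $\mathcal{R}\subseteq V_\ell$ directly, then $V_\ell\subseteq W\subseteq\mathcal{R}$, which forces all inclusions to be equalities and in particular shows $\mathcal{R}=V_\ell$ is a subspace a posteriori. Concretely: $\mathcal{R}(\pi)\subseteq V_\ell$ for every $\pi\in\Sigma^k$ follows by induction on $k$ using that $\mathrm{Im}(B_i)\subseteq V_1\subseteq V_\ell$ and $A_i V_\ell\subseteq V_\ell$ (the latter because $V_\ell=V_{\ell+1}=V_\ell+\sum_i A_i V_\ell$); each generator of $\mathcal{R}(\pi)$ has the form $A_{i_k}\cdots A_{i_{j+1}}B_{i_j}u_j$, which after pushing the $B_{i_j}u_j\in V_\ell$ through the invariant maps $A_i$ stays in $V_\ell$. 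The same computation shows $W\subseteq V_\ell$. And $W\subseteq\mathcal{R}$ holds because each generator of $W$ lies in $\mathcal{R}(i_1\cdots i_k)$, hence in $\mathcal{R}$, and then $W=\mathrm{span}(\dots)$; but to get $W\subseteq\mathcal{R}$ as sets I only need $\mathcal{R}$ to contain a spanning set — this is fine once I've shown $\mathcal{R}=V_\ell$, so logically I will do: prove $\mathcal{R}\subseteq V_\ell$ and $W\subseteq V_\ell$ and $V_\ell\subseteq W$ and $W\subseteq\mathcal{R}$-in-the-weak-sense-that-generators-lie-in-$\mathcal{R}$; combined with $V_\ell\subseteq W$ this gives $V_\ell\subseteq\mathcal{R}\subseteq V_\ell$.

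The main obstacle, and the real content, is $V_\ell\subseteq W$. The natural approach is to show by induction on $k$ that $V_k\subseteq W$ for all $k$, hence $V_\ell\subseteq W$. The base case $V_1=\sum_i\mathrm{Im}(B_i)\subseteq W$ is clear (take $k=1$ in the definition of $W$). For the inductive step I must show $A_i V_k\subseteq W$ whenever $V_k\subseteq W$. A generator of $V_k$ coming from $W$ has the form $v=A_{i_r}\cdots A_{i_2}b_{i_1}$ with $r\le k$ (using the inductive hypothesis that $V_k$ is spanned by such vectors with $r\le k$), and then $A_i v = A_i A_{i_r}\cdots A_{i_2}b_{i_1}$ is a generator of $W$ with the new leading index, of length $r+1\le k+1$. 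The subtlety is the length bound: the obvious induction only bounds lengths by $k$, not by $n$, so a priori $V_n\subseteq W$ would need generators of length up to $n$ — which is exactly the bound claimed, so this is consistent. The point is that by Lemma~\ref{nested} the sequence stabilizes at $\ell\le n$, so $V_\ell=V_n$ and generators of length at most $n$ suffice; here I must be careful that the inductive hypothesis is phrased as ``$V_k$ is contained in the span of products $A_{i_r}\cdots A_{i_2}b_{i_1}$ with $r\le k$,'' and that increasing $k$ to $\ell\le n$ keeps $r\le\ell\le n$. I would also remark that invertibility of the $A_i$ is used only to guarantee $A_i V_\ell = V_\ell$ (equivalently $\dim A_i V_\ell=\dim V_\ell$) so that $V_\ell$ is genuinely $A_i$-invariant, not merely forward-invariant; without it the recursion for $V_k$ still makes sense but $\mathcal{R}$ picks up extra directions from iterating singular $A_i$'s, which is why the degenerate case is treated separately in Section~\ref{sec:degenerate}.

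Finally, the controllability criterion is immediate: $\mathcal{R}=\mathbb{R}^n$ iff $W=\mathbb{R}^n$, which is the displayed span condition. I do not expect any difficulty beyond bookkeeping the length indices in the induction for $V_\ell\subseteq W$; everything else is the standard reachability-subspace computation plus the fixed-point Lemma~\ref{nested}.
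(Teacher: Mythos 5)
There is a genuine gap, and it is precisely at the one nontrivial point of the theorem. Your argument correctly establishes the chain $W = V_\ell = \mathrm{span}(\mathcal{R})$ (your induction for $V_\ell\subseteq W$ is in substance the paper's Lemma~\ref{link} combined with Lemma~\ref{nested}, and the forward inclusions $\mathcal{R}(\pi)\subseteq V_\ell$, $W\subseteq V_\ell$ are routine). But it never establishes $\mathcal{R}=\mathrm{span}(\mathcal{R})$, i.e.\ that the \emph{union} of subspaces $\mathcal{R}=\bigcup_\pi\mathcal{R}(\pi)$ is itself a subspace. Your proposed workaround --- ``to get $W\subseteq\mathcal{R}$ as sets I only need $\mathcal{R}$ to contain a spanning set --- this is fine once I've shown $\mathcal{R}=V_\ell$'' --- is circular: the implication ``$\mathcal{R}$ contains a spanning set of $W$ $\Rightarrow$ $W\subseteq\mathcal{R}$'' is valid only if $\mathcal{R}$ is closed under linear combinations, which is exactly the statement you are trying to prove. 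Knowing that every generator $A_{i_k}\cdots A_{i_2}b_{i_1}$ lies in \emph{some} $\mathcal{R}(\pi)$ does not put a sum of two such generators, taken from different sequences, into any single $\mathcal{R}(\pi)$.

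The missing idea is the paper's maximality argument, and it is also where invertibility is really used (your closing remark that invertibility serves only to make $V_\ell$ two-sidedly $A_i$-invariant is off target; all of $W=V_\ell=\mathrm{span}(\mathcal{R})$ holds with singular $A_i$ as well). Pick $\pi$ with $\dim\mathcal{R}(\pi)$ maximal. By Lemma~\ref{concatenate}, $\mathcal{R}(\pi_1\pi)=A_\pi\mathcal{R}(\pi_1)+\mathcal{R}(\pi)\supseteq\mathcal{R}(\pi)$, so by maximality $\mathcal{R}(\pi_1\pi)=\mathcal{R}(\pi)$ and hence $A_\pi\mathcal{R}(\pi_1)\subseteq\mathcal{R}(\pi)$, i.e.\ $\mathcal{R}(\pi_1)\subseteq A_\pi^{-1}\mathcal{R}(\pi)$ for every $\pi_1$. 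Taking $\pi_1=\pi$ and comparing dimensions gives $A_\pi^{-1}\mathcal{R}(\pi)=\mathcal{R}(\pi)$, whence $\mathcal{R}(\pi_1)\subseteq\mathcal{R}(\pi)$ for all $\pi_1$ and $\mathcal{R}=\mathcal{R}(\pi)$ is a single subspace. Without invertibility this fails, and indeed in the degenerate case $\mathcal{R}$ can be a proper (non-subspace) union inside $V_\ell$; see Remark~\ref{rem-equivalent}. Once this step is added, the rest of your write-up goes through.
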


The proof of Theorem~\ref{main++} is based on two lemmas that are introduced next.
We note that for a sequence $\pi = i_1\cdots i_k$ 
the set $\mathcal{R}(\pi)$ is formed by all elements of $\mathbb{R}^n$ of the form 
\begin{equation}\label{reachable-vectors}
A_{i_{k}}\cdots A_{i_2}B_{i_1}u_{1}+A_{i_{k}}\cdots A_{i_3}B_{i_2}u_{2}+\dots + B_{i_k} u_{k},
\end{equation}
 with $u_1,\dots,u_k$ vectors of appropriate dimensions. 
\smallskip

\begin{lemma}
\label{concatenate}
We have
\[\mathcal{R}(\pi_1\pi_2)= A_{\pi_2} \mathcal{R}(\pi_1) + \mathcal{R}(\pi_2)\]
for every sequences $\pi_1,\pi_2$.
\end{lemma}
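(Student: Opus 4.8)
The plan is to prove the two inclusions $\mathcal{R}(\pi_1\pi_2)\subseteq A_{\pi_2}\mathcal{R}(\pi_1)+\mathcal{R}(\pi_2)$ and $A_{\pi_2}\mathcal{R}(\pi_1)+\mathcal{R}(\pi_2)\subseteq\mathcal{R}(\pi_1\pi_2)$ directly from the explicit description~\eqref{reachable-vectors} of reachable vectors, by splitting the control sequence for the concatenated switching word into its first and second halves. Write $\pi_1=i_1\cdots i_{k_1}$ and $\pi_2=j_1\cdots j_{k_2}$, so that $\pi_1\pi_2 = i_1\cdots i_{k_1}j_1\cdots j_{k_2}$ has length $k_1+k_2$. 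A generic element of $\mathcal{R}(\pi_1\pi_2)$ is obtained from~\eqref{reachable-vectors} applied to the word $\pi_1\pi_2$ with control inputs $u_1,\dots,u_{k_1},v_1,\dots,v_{k_2}$.

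The key observation is that in the sum~\eqref{reachable-vectors} for $\pi_1\pi_2$, the terms containing the inputs $u_1,\dots,u_{k_1}$ (those associated with the modes of $\pi_1$) all carry a common left factor $A_{j_{k_2}}\cdots A_{j_1} = A_{\pi_2}$, since after processing $\pi_1$ the state is propagated through all of $\pi_2$; explicitly, the term multiplying $u_r$ is $A_{\pi_2}\,(A_{i_{k_1}}\cdots A_{i_{r+1}}B_{i_r})$ for $1\le r\le k_1$. Factoring out $A_{\pi_2}$, this partial sum equals $A_{\pi_2}$ applied to a generic element of $\mathcal{R}(\pi_1)$ (using the description~\eqref{reachable-vectors} for $\pi_1$). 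The remaining terms, those multiplying $v_1,\dots,v_{k_2}$, are exactly of the form~\eqref{reachable-vectors} for the word $\pi_2$, hence form a generic element of $\mathcal{R}(\pi_2)$. Since the inputs $u_1,\dots,u_{k_1}$ and $v_1,\dots,v_{k_2}$ range independently, this simultaneously establishes both inclusions and hence the claimed equality.

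This lemma is essentially a bookkeeping identity, so there is no serious obstacle; the only point requiring a little care is to track the left multiplicative factors correctly and to verify that the first block of terms reproduces $A_{\pi_2}\mathcal{R}(\pi_1)$ rather than something smaller—this uses that $A_{\pi_2}$ acts linearly, so $A_{\pi_2}\mathcal{R}(\pi_1)$ is indeed the image of the subspace $\mathcal{R}(\pi_1)$ under $A_{\pi_2}$, matching the notation fixed in the Notations section. No invertibility of the $A_i$ is needed here; the identity holds for arbitrary $\pi_1,\pi_2$, including the degenerate cases $\pi_1=\epsilon$ or $\pi_2=\epsilon$, which are consistent with the convention $\mathcal{R}(\epsilon)=\{0\}$ and $A_\epsilon=\mathrm{Id}$.
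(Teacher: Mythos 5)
Your proposal is correct and follows essentially the same argument as the paper: write out the explicit expression~\eqref{reachable-vectors} for the concatenated word, factor the common left multiplier $A_{\pi_2}$ out of the terms coming from $\pi_1$, and observe that the remaining terms describe a generic element of $\mathcal{R}(\pi_2)$, with the two blocks of inputs varying independently. Nothing further is needed.
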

\begin{proof}
By~\eqref{reachable-vectors}, if $\pi_1 = i_1\cdots i_{k_1}$ and $\pi_2=j_1\cdots j_{k_2}$, $\mathcal{R}(\pi_1\pi_2)$ coincides with the space of elements of $\mathbb{R}^n$ which can be written as
\begin{align*}
 A&_{j_{k_2}}\cdots A_{i_2}B_{i_1}u_{1}+\dots \ldots + A_{j_{k_2}}\cdots A_{j_1}B_{i_{k_1}}u_{{k_1}} \\
&+ A_{j_{k_2}}\cdots A_{j_2}B_{j_1}v_{1} +\ldots  + B_{j_{k_2}}v_{{k_2}}\\
= & A_{\pi_2} (A_{i_{k_1}}\cdots A_{i_2}B_{i_1} u_{1}+\ldots+B_{i_{k_1}}u_{{k_1}})\\
& + A_{j_{k_2}}\cdots A_{j_2}B_{j_1}v_{1} +\ldots+ B_{j_{k_2}}v_{{k_2}},
\end{align*}
for some vectors $u_1,\dots,u_{k_1},v_1,\dots,v_{k_2}$. The lemma follows as an immediate consequence.
\end{proof}

\begin{lemma}
\label{link}
We have for all $p\ge 1$
\begin{equation}
V_p = \mathrm{span}\{A_{i_{k}}\cdots A_{i_2}b_{i_1}\mid i_1\cdots i_{k}\in \Sigma^{k},\ 1\leq k\leq p,\ b_{i_{1}}\in \mathrm{Im}(B_{i_{1}})\}
\label{eq:rewrite}
\end{equation}
where $V_p$ is as in \eqref{eq:V1}-\eqref{eq:V2}. Furthermore
\begin{equation}
\label{vk}
V_p = \sum_{\pi\in\cup_{k=1}^p\Sigma^k} \mathcal{R}(\pi).
\end{equation}
\end{lemma}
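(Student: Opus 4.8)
The plan is to establish the two identities separately, deriving \eqref{vk} from \eqref{eq:rewrite}.

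For \eqref{eq:rewrite} I would argue by induction on $p$. The base case $p=1$ is immediate: when $k=1$ the product $A_{i_k}\cdots A_{i_2}$ is empty, so the right-hand side of \eqref{eq:rewrite} is $\mathrm{span}\{b_{i_1}\mid i_1\in\Sigma,\ b_{i_1}\in\mathrm{Im}(B_{i_1})\}=\sum_{i=1}^m\mathrm{Im}(B_i)=V_1$. For the inductive step, suppose \eqref{eq:rewrite} holds at level $p$. By linearity of each $A_i$ one has $A_iV_p=\mathrm{span}\{A_iA_{i_k}\cdots A_{i_2}b_{i_1}\}$ over the same index set; reading $A_iA_{i_k}\cdots A_{i_2}b_{i_1}$ as $A_{j_{k+1}}A_{j_k}\cdots A_{j_2}b_{j_1}$ with $j_1\cdots j_{k+1}=i_1\cdots i_k\, i$, this is precisely the span of the generators associated with sequences of length between $2$ and $p+1$. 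Combining this with $V_p$, which by the inductive hypothesis is the span of the generators for lengths $1$ to $p$, and using $V_{p+1}=V_p+\sum_{i=1}^m A_iV_p$ from \eqref{eq:V2}, gives the span over all lengths $1$ to $p+1$ with no gap, i.e.\ \eqref{eq:rewrite} at level $p+1$.

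For \eqref{vk} I would show that $\sum_{\pi\in\cup_{k=1}^p\Sigma^k}\mathcal{R}(\pi)$ equals the right-hand side of \eqref{eq:rewrite} by double inclusion, using the explicit description of $\mathcal{R}(\pi)$ coming from \eqref{reachable-vectors}: for $\pi=i_1\cdots i_k$, the space $\mathcal{R}(\pi)$ is spanned by the vectors $A_{i_k}\cdots A_{i_{j+1}}b$ with $1\le j\le k$ and $b\in\mathrm{Im}(B_{i_j})$. For $\subseteq$, such a generator equals $A_{\sigma_r}\cdots A_{\sigma_2}b_{\sigma_1}$ for the suffix $\sigma=i_j\cdots i_k$, which has length $r=k-j+1\le k\le p$, hence it lies in the right-hand side of \eqref{eq:rewrite}. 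For $\supseteq$, a generator $A_{i_k}\cdots A_{i_2}b_{i_1}$ with $i_1\cdots i_k$ of length $k\le p$ and $b_{i_1}\in\mathrm{Im}(B_{i_1})$ is exactly the $j=1$ generator of $\mathcal{R}(i_1\cdots i_k)$, hence lies in the sum.

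The only mildly delicate point is the bookkeeping: keeping the range of sequence lengths contiguous through the induction, and checking that passing to a suffix does not increase the length. There is no genuine obstacle here; in particular, invertibility of the matrices $A_i$ is not used anywhere in this lemma.
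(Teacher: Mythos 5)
Your proof is correct and follows essentially the same route as the paper: \eqref{eq:rewrite} by unwinding the recursion \eqref{eq:V1}--\eqref{eq:V2} (the paper states this in one line, you make the induction explicit), and \eqref{vk} by double inclusion via the generator description \eqref{reachable-vectors}, matching suffixes of $\pi$ in one direction and the $j=1$ generator in the other. Your remark that invertibility of the $A_i$ is not needed here is also consistent with the paper, which does not invoke it in this lemma.
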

\begin{proof}
Equation~\eqref{eq:rewrite} simply follows from the recursive construction of $V_p$. Concerning~\eqref{vk}, the inclusion $\supseteq$ follows from the explicit expression~\eqref{reachable-vectors} of the elements of $\mathcal{R}(\pi)$ for a sequence $\pi$. The inclusion $\subseteq$ is obtained by observing that $V_p$ is generated by elements  of the form  $A_{i_{k}}\cdots A_{i_2}b_{i_1}$, with $k\leq p$, where $b_{i_1}$ is a column of $B_{i_1}$; by~\eqref{reachable-vectors}  each element of this form belongs to $\mathcal{R}(\bar\pi)$ where $\bar \pi = i_1\cdots i_{k}\in\Sigma^k$ and corresponds to the sequence of control  inputs $u_1=e_j,~u_2=\dots=u_{k}=0$ for some index $j$.
\end{proof}

We can now prove the theorem.
\begin{proof}[Proof of Theorem~\ref{main++}]
We note that from Lemma~\ref{nested} it follows that $V_{\ell} = V_n$. This, together with~\eqref{eq:rewrite}, gives the 
second equality in \eqref{eq:th1}.
Moreover, it follows from~\eqref{vk} and from Lemma~\ref{nested}  that
$$
\mathrm{span}(\mathcal R) = \sum_{\pi\in\Sigma^*} \mathcal{R}(\pi)=\sum_{k\in \mathbb{N}^*} V_k =V_\ell.
$$
Hence,
the first equality in \eqref{eq:th1} holds true when we replace $\mathcal{R}$ by $\mathrm{span}(\mathcal{R})$. It then remains to show that $\mathcal{R}$ is a vector space\footnote{The fact that $\mathcal{R}$ is a vector space when the matrices $A_i$ are invertible was actually already shown in~\cite{stanford1980controllability}. 
We include a proof here for the sake of completeness.}
For this purpose consider a vector space $\mathcal{R}(\pi)$ having maximal dimension among all sequences $\pi\in\Sigma^*$. By Lemma~\ref{concatenate} we have $\mathcal{R}(\pi_1 \pi) = A_{\pi} \mathcal{R}(\pi_1) + \mathcal{R}(\pi)$ for every $\pi_1 \in\Sigma^*$, and in particular $\mathcal{R}(\pi_1 \pi)$ contains $\mathcal{R}(\pi)$. By maximality of $\mathcal{R}(\pi)$ we then have  $\mathcal{R}(\pi_1 \pi)=\mathcal{R}(\pi)$. It follows that $A_{\pi} \mathcal{R}(\pi_1) \subseteq \mathcal{R}(\pi)$ and, since $A_{\pi}$ is nonsingular, $\mathcal{R}(\pi_1) \subseteq A_{\pi}^{-1}\mathcal{R}(\pi)$ for every $\pi_1\in\Sigma^*$. In the special case $\pi_1=\pi$ it follows that $\mathcal{R}(\pi) = A_{\pi}^{-1}\mathcal{R}(\pi)$ 
so that $\mathcal{R}(\pi_1)\subseteq A_{\pi}^{-1}\mathcal{R}(\pi) = \mathcal{R}(\pi)$ for every sequence $\pi_1$. This implies that $\mathcal{R}(\pi) = \mathcal{R}$.
\end{proof}

Theorem~\ref{main++} provides a novel characterization of the reachable set of discrete-time switched linear control systems\footnote{An analogous characterization of the reachable set of continuous-time switched linear control systems has been recently pointed out in \cite[Theorem 7.7]{book-switched}.}.  
In~\cite{ge2001reachability}, it was shown that
$$
\mathcal{R}=\sum_{i_1,\dots,i_n=1,\dots,m}^{j_1,\dots,j_n=0,\dots,n-1}A_{i_n}^{j_n}\cdots A_{i_1}^{j_1}\mathrm{Im}(B_{i_1}).
$$
In comparison, the characterization given by Theorem~\ref{main++} reads:
$$
\mathcal{R}=\sum_{i_1,\dots,i_n=1,\dots,m}^{j_2,\dots,j_n=0,1}A_{i_n}^{j_n}\cdots A_{i_2}^{j_2}\mathrm{Im}(B_{i_1}).
$$
It is readily seen that
the latter expression involves
sums with fewer terms ($m^n \times 2^{n-1}$ instead of
$m^n \times n^{n}$), each of which involve
less products of matrices (up to $n$ instead of $n^n$). 
Moreover, \eqref{eq:V1}-\eqref{eq:V2} provides an effective algorithm for computing the reachable space.
Finally, for $m=1$, \eqref{eq:th1} reduces to the classical characterization of the reachable set from the origin for linear control systems and to the classical Kalman controllability criterion. Hence, it represents a very natural generalization of these results in the setting of switched linear control systems. 
This characterization will be instrumental in deriving upper bounds on the length of the shortest controllable sequence, presented next.

\subsection{Minimal length of controllable sequences}

We can now establish the following tight upper bound on the length of controllable sequences of minimal length:
\smallskip

\begin{theorem}
\label{main++2}
Consider the system~\eqref{slcs} and assume that the matrices $A_i$ are invertible for $i=1,\dots,m$.
The following results hold true:
\begin{itemize}
\item[1.] If~\eqref{slcs} is controllable, then there exists a controllable sequence $\pi$ of length $k\leq \frac{n(n+1)}{2}$.
\item[2.] There exist pairs $(A_i,B_i)$ for $i=1,\dots,m$ such that~\eqref{slcs} is controllable and the shortest controllable sequences have length 
\begin{itemize}
\item[(a)] $n+\frac{m(m-1)}{2}$, if $m \leq n$, 
\item[(b)] $\frac{n(n+1)}{2}$, if $m > n$.
\end{itemize}
\end{itemize}
\end{theorem}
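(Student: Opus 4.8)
\emph{Part 1.} The plan is a greedy left-to-right construction. By Lemma~\ref{concatenate}, appending a block $\sigma$ to a sequence $\pi$ yields $\mathcal{R}(\pi\sigma)=A_\sigma\mathcal{R}(\pi)+\mathcal{R}(\sigma)$; since $A_\sigma$ is invertible, $\dim\mathcal{R}(\pi\sigma)\ge\dim\mathcal{R}(\pi)$ always, with equality if and only if $A_\sigma^{-1}\mathcal{R}(\sigma)\subseteq\mathcal{R}(\pi)$. The key step is: \textbf{if $\dim\mathcal{R}(\pi)=r<n$, then there is $\sigma$ with $|\sigma|\le r+1$ and $\dim\mathcal{R}(\pi\sigma)>r$.} To prove it, introduce the filtration $\widetilde V_1=\sum_i A_i^{-1}\mathrm{Im}(B_i)$, $\widetilde V_{k+1}=\widetilde V_k+\sum_i A_i^{-1}\widetilde V_k$, i.e. the filtration~\eqref{eq:V1}--\eqref{eq:V2} for the ``time-reversed'' modes $(A_i^{-1},A_i^{-1}B_i)$. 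Exactly as in Lemmas~\ref{nested} and~\ref{link} one obtains $\dim\widetilde V_k\ge\min(k,n)$ and the identity $\widetilde V_j=\sum_{\sigma\in\Sigma^{\le j}}A_\sigma^{-1}\mathcal{R}(\sigma)$; since a subspace of $\mathbb{R}^n$ is $A_i^{-1}$-invariant iff it is $A_i$-invariant, the stable value of $\widetilde V_k$ is the smallest $A_i$-invariant subspace containing all $\mathrm{Im}(B_i)$, which equals $V_\ell$, hence $\mathbb{R}^n$ by controllability and Theorem~\ref{main++}. Combining the identity for $\widetilde V_j$ with the equality condition above gives: $\dim\mathcal{R}(\pi\sigma)=\dim\mathcal{R}(\pi)$ for \emph{every} $\sigma\in\Sigma^{\le j}$ if and only if $\widetilde V_j\subseteq\mathcal{R}(\pi)$. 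Thus if $\dim\mathcal{R}(\pi)=r<n$ then $\widetilde V_{r+1}$, of dimension $\ge r+1>r$, is not contained in $\mathcal{R}(\pi)$, so some $\sigma$ of length $\le r+1$ strictly raises the dimension.

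With this, build the sequence iteratively: start from $\pi^{(0)}=\epsilon$ ($\mathcal{R}(\pi^{(0)})=\{0\}$); given $\pi^{(t)}$ with $r_t:=\dim\mathcal{R}(\pi^{(t)})<n$, append a block of length $\le r_t+1$ to get $\pi^{(t+1)}$ with $r_{t+1}>r_t$; stop at the first $T$ with $r_T=n$, so $\pi^{(T)}$ is controllable. Its length is at most $\sum_{t=0}^{T-1}(r_t+1)$, where $0=r_0<r_1<\cdots<r_{T-1}\le n-1$. A direct optimization shows this sum is largest precisely when $r_t=t$ for all $t$ (so $T=n$), in which case it equals $1+2+\cdots+n=\tfrac{n(n+1)}{2}$, and is strictly smaller otherwise. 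This proves item~1.

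\emph{Part 2.} For the tightness statements I would exhibit explicit families and analyze them via the representation $\mathcal{R}(i_1\cdots i_L)=\mathrm{span}\{A_{i_L}\cdots A_{i_{p+1}}\,\mathrm{Im}(B_{i_p})\colon 1\le p\le L\}$ coming from~\eqref{reachable-vectors}. For (a) take $B_m=e_1$ (rank one), $B_i=0$ for $i<m$, and the invertible matrices $A_1,\dots,A_m$ in a suitably tuned ``staircase'' form so that (i) mode $m$ alone only reaches dimension $n-m+1$ (its matrix $A_m$ stabilises an $(n-m+1)$-dimensional subspace on which $e_1$ is cyclic), and (ii) passing from dimension $n-m+j$ to $n-m+j+1$ forces at least $j$ modes among $1,\dots,m-1$ to be interposed between two occurrences of mode $m$. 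Then the sequence with $n$ occurrences of mode $m$ separated by blocks of lengths $0,\dots,0,1,2,\dots,m-1$ is controllable and has length $n+\tfrac{m(m-1)}{2}$, giving the upper bound; the matching lower bound comes from a dimension count over the positions of mode $m$ and of the interleaved modes in an arbitrary $\pi$, showing $\dim\mathcal{R}(\pi)=n$ is impossible when $|\pi|<n+\tfrac{m(m-1)}{2}$. For (b), take the family of (a) built with exactly $n$ modes (shortest controllable sequences of length $\tfrac{n(n+1)}{2}=n+\tfrac{n(n-1)}{2}$) and adjoin $m-n$ dummy modes with $A_i=\Id$, $B_i=0$; by Lemma~\ref{concatenate}, deleting a dummy mode from a sequence leaves its reachable space unchanged, so the shortest controllable sequence of the enlarged $m$-mode system still has length $\tfrac{n(n+1)}{2}$.

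\emph{Main obstacle.} The crux is the lower bound in Part~2(a): proving that \emph{no} sequence shorter than $n+\tfrac{m(m-1)}{2}$ is controllable for the constructed family. This requires both the right normal form for the $A_i$ and a delicate combinatorial argument bounding $\dim\mathcal{R}(\pi)$ in terms of how the occurrences of the injecting mode and the remaining modes are arranged within $\pi$.
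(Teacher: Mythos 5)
Your Part~1 is correct, and it is essentially the paper's greedy argument in dual form: the paper \emph{prepends} blocks, using Lemma~\ref{concatenate2} together with the filtration $V_k$ of \eqref{eq:V1}--\eqref{eq:V2}, while you \emph{append} blocks and therefore need the time-reversed filtration $\widetilde V_k$ built from $(A_i^{-1},A_i^{-1}B_i)$. Your two auxiliary claims are sound (the identity $\widetilde V_j=\sum_{\sigma\in\cup_{h\le j}\Sigma^h}A_\sigma^{-1}\mathcal{R}(\sigma)$ follows as in Lemma~\ref{link}, and the stable value of $\widetilde V_k$ coincides with $V_\ell$, hence with $\mathbb{R}^n$ under controllability, because for invertible $A_i$ a subspace is $A_i$-invariant iff it is $A_i^{-1}$-invariant), and your final count over the distinct dimensions $r_t\in\{0,\dots,n-1\}$, giving $\sum_t(r_t+1)\le\frac{n(n+1)}{2}$, matches the paper's bound $\sum_k k\,(\dim V_k-\dim V_{k-1})\le\frac{n(n+1)}{2}$.

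Part~2, however, has a genuine gap, which you yourself flag. For case (a) you never exhibit concrete matrices $A_1,\dots,A_m$ --- only a wish list of properties (i)--(ii) that a ``suitably tuned staircase form'' should satisfy --- and, more importantly, you give no argument for the lower bound, i.e., that \emph{no} controllable sequence of length $<n+\frac{m(m-1)}{2}$ exists for the family; ``a dimension count over the positions of mode $m$'' is a statement of intent, not a proof, and this lower bound is precisely the content of Item~2 (it is what makes the Item~1 bound tight for $m\ge n$). For comparison, the paper takes $B_1=e_1$, $B_k=0$ for $k\ge 2$, $A_1$ a cyclic shift $P_{n-m+1}$ padded with an identity block, and, for $k\ge 2$, $A_k$ equal to the identity except for a single adjacent transposition $P_2$ sliding along the diagonal; it exhibits a controllable sequence of length $n+\frac{m(m-1)}{2}$ by explicitly tracking the reached coordinate subspaces, and it proves minimality by assigning weights $\nu_i$ to the basis vectors ($\nu_i=1$ for $i\le n-m+1$, $\nu_i=i+m-n$ otherwise) and checking that applying any single mode to a coordinate subspace increases the total weight by at most one, so at least $\nu(\mathbb{R}^n)=n+\frac{m(m-1)}{2}$ steps are needed. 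Until you supply both an explicit family and such a monotonicity/weight (or equivalent combinatorial) argument, case (a) is unproven, and with it case (b), which you otherwise reduce correctly to (a) with $m=n$ via dummy modes --- the same device, up to replacing duplication by $(\mathrm{Id},0)$ modes, used in the paper.
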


Concerning the second item, it essentially states that the upper bound on the length of the shortest controllable sequences obtained in the first item is actually optimal when $m\ge n$. This result may appear counterintuitive, since one may think that, when the shortest controllable sequence is applied, the dimension of the space reached from the origin strictly increases at each time step, implying that the length of the sequence does not exceed $n$. The fact that this is not always the case has already been observed in~\cite{conner1984state}.

The proof of Theorem~\ref{main++2} relies on the following lemma.
\smallskip

\begin{lemma}
\label{concatenate2}
Assume that the matrices $A_i$ are invertible for $i=1,\dots,m$.
Consider the spaces $V_k$ in \eqref{eq:V1}-\eqref{eq:V2} and set $V_0 = \{0\}$.
Take a sequence $\pi\in\Sigma^*$ and let $k\geq 1$ be such that \[\mathrm{dim}(V_{k-1})\leq\mathrm{dim}(\mathcal{R}(\pi))<\mathrm{dim}(V_{k}).\]
Then there exists a sequence $\pi_0$ of length no greater than $k$ such that \[\mathrm{dim}(\mathcal{R}(\pi_0\pi))>\mathrm{dim}(\mathcal{R}(\pi)).\]
\end{lemma}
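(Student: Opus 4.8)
The plan is to exploit the characterization $V_p = \sum_{\pi'\in\cup_{k=1}^p\Sigma^k}\mathcal{R}(\pi')$ from Lemma~\ref{link} together with the concatenation formula $\mathcal{R}(\pi_0\pi)=A_\pi\mathcal{R}(\pi_0)+\mathcal{R}(\pi)$ from Lemma~\ref{concatenate}. The key observation is that, since $\dim(\mathcal{R}(\pi))<\dim(V_k)$, the space $\mathcal{R}(\pi)$ cannot contain $V_k$; hence it cannot contain all the reachable spaces $\mathcal{R}(\pi')$ with $|\pi'|\le k$, because those spaces span $V_k$. So there exists a sequence $\pi_0$ with $1\le|\pi_0|\le k$ such that $\mathcal{R}(\pi_0)\not\subseteq \mathcal{R}(\pi)$. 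The point of using $V_k$ rather than, say, $V_\ell$, is precisely to control the length of $\pi_0$ by $k$.

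Once such a $\pi_0$ is fixed, I would argue as follows. From $\mathcal{R}(\pi_0\pi)=A_\pi\mathcal{R}(\pi_0)+\mathcal{R}(\pi)\supseteq \mathcal{R}(\pi)$ we always have $\dim(\mathcal{R}(\pi_0\pi))\ge\dim(\mathcal{R}(\pi))$, with equality if and only if $A_\pi\mathcal{R}(\pi_0)\subseteq\mathcal{R}(\pi)$, i.e. $\mathcal{R}(\pi_0)\subseteq A_\pi^{-1}\mathcal{R}(\pi)$ (using invertibility of $A_\pi$). This is not yet a contradiction, since a priori $A_\pi^{-1}\mathcal{R}(\pi)$ could be strictly larger than $\mathcal{R}(\pi)$. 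The fix is the identity $\mathcal{R}(\pi)=A_\pi^{-1}\mathcal{R}(\pi)$ — equivalently $A_\pi\mathcal{R}(\pi)=\mathcal{R}(\pi)$ — which is exactly what was established inside the proof of Theorem~\ref{main++} for a maximal-dimension reachable space, but in fact holds for every $\pi$ when all $A_i$ are invertible, because $\mathcal{R}$ is a vector space and $A_\pi\mathcal{R}(\pi)\subseteq A_\pi\mathcal{R}=\mathcal{R}=\mathcal{R}(\tilde\pi)$ for the maximal $\tilde\pi$; more directly, concatenating $\pi$ with itself repeatedly and using $\mathcal{R}(\pi^{j})\supseteq A_\pi^{j-1}\mathcal{R}(\pi)$ forces $A_\pi\mathcal{R}(\pi)=\mathcal{R}(\pi)$ by a dimension-stabilization argument. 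I would state this as a short preliminary claim (or simply quote the relevant line of the proof of Theorem~\ref{main++}). Granting it, $A_\pi^{-1}\mathcal{R}(\pi)=\mathcal{R}(\pi)$, so $\mathcal{R}(\pi_0)\subseteq A_\pi^{-1}\mathcal{R}(\pi)$ would give $\mathcal{R}(\pi_0)\subseteq\mathcal{R}(\pi)$, contradicting the choice of $\pi_0$. Therefore $\dim(\mathcal{R}(\pi_0\pi))>\dim(\mathcal{R}(\pi))$, as desired.

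So the proof has three moves: (i) use $V_k=\sum_{|\pi'|\le k}\mathcal{R}(\pi')$ and $\dim(\mathcal{R}(\pi))<\dim(V_k)$ to extract $\pi_0$ with $|\pi_0|\le k$ and $\mathcal{R}(\pi_0)\not\subseteq\mathcal{R}(\pi)$; (ii) record that $A_\pi$ stabilizes $\mathcal{R}(\pi)$, hence $A_\pi^{-1}\mathcal{R}(\pi)=\mathcal{R}(\pi)$; (iii) combine with Lemma~\ref{concatenate} to conclude the strict dimension increase.

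The main obstacle is step (ii): the naive concatenation bound only yields $\mathcal{R}(\pi_0)\subseteq A_\pi^{-1}\mathcal{R}(\pi)$, which is useless unless one knows $A_\pi$ preserves $\mathcal{R}(\pi)$. Making sure this identity is available and correctly invoked — either by re-deriving it via iterated self-concatenation of $\pi$ and the pigeonhole on dimensions, or by citing the argument already used in the proof of Theorem~\ref{main++} — is the delicate point; the hypothesis that all $A_i$ are invertible is used precisely here. Note also that $k\ge 1$ is given and $V_{k-1}\subseteq\mathcal{R}(\pi)$ is consistent (it is used only implicitly, to ensure the setup is non-vacuous), while the strict inequality $\dim(\mathcal{R}(\pi))<\dim(V_k)$ is what drives the whole argument.
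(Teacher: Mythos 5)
Your step (ii) contains a genuine error: the identity $A_\pi\mathcal{R}(\pi)=\mathcal{R}(\pi)$ does \emph{not} hold for every sequence $\pi$; the proof of Theorem~\ref{main++} establishes it only for a sequence whose reachable space has \emph{maximal} dimension, and that is essential. A one-mode counterexample: take $n=2$, $A_1=\left(\begin{smallmatrix}0&1\\1&0\end{smallmatrix}\right)$, $B_1=e_1$ and $\pi=1$; then $\mathcal{R}(\pi)=\mathrm{span}\{e_1\}$ while $A_\pi\mathcal{R}(\pi)=\mathrm{span}\{e_2\}$. Neither of your two justifications survives this: the inclusion $A_\pi\mathcal{R}(\pi)\subseteq A_\pi\mathcal{R}=\mathcal{R}$ only places the image inside the full reachable set, and the self-concatenation/dimension-stabilization argument shows that the \emph{stabilized} space $\mathcal{R}(\pi^j)$ for $j$ large is $A_\pi$-invariant, not $\mathcal{R}(\pi)$ itself. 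As a result, in step (iii) the containment $\mathcal{R}(\pi_0)\subseteq A_\pi^{-1}\mathcal{R}(\pi)$ cannot be converted into $\mathcal{R}(\pi_0)\subseteq\mathcal{R}(\pi)$, and the intended contradiction with your choice of $\pi_0$ is not reached.

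The repair is small and is exactly what the paper does: choose $\pi_0$ so that $\mathcal{R}(\pi_0)\not\subseteq A_\pi^{-1}\mathcal{R}(\pi)$, rather than $\mathcal{R}(\pi_0)\not\subseteq\mathcal{R}(\pi)$. Your own dimension count in step (i) already delivers this stronger choice: since $A_\pi$ is invertible, $\dim\bigl(A_\pi^{-1}\mathcal{R}(\pi)\bigr)=\dim\bigl(\mathcal{R}(\pi)\bigr)<\dim(V_k)$, while $V_k=\sum_{\pi'\in\cup_{h=1}^{k}\Sigma^h}\mathcal{R}(\pi')$ by Lemma~\ref{link}; if every such $\mathcal{R}(\pi')$ were contained in $A_\pi^{-1}\mathcal{R}(\pi)$, then $V_k$ would be too, contradicting the dimensions. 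For such a $\pi_0$ one has $A_\pi\mathcal{R}(\pi_0)\not\subseteq\mathcal{R}(\pi)$, and Lemma~\ref{concatenate} gives the strict dimension increase directly, with no invariance claim needed. In short, steps (i) and (iii) match the paper's argument; only the reference subspace against which non-containment is tested must be $A_\pi^{-1}\mathcal{R}(\pi)$, which removes the need for the false identity in step (ii).
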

\begin{proof}
Using Lemma~\ref{concatenate} we have $\mathcal{R}(\pi_0\pi)= A_{\pi} \mathcal{R}(\pi_0) + \mathcal{R}(\pi).$ Since $A_{\pi}$ is an invertible matrix, we can consider the space $A_{\pi}^{-1} \mathcal{R}(\pi)$, which has the same dimension as $\mathcal{R}(\pi)$. 
As $\mathrm{dim}(\mathcal{R}(\pi))<\mathrm{dim}(V_{k})$ we then deduce from~\eqref{vk} the existence of $\pi_0\in\cup_{h=1}^k\Sigma^h$ 
such that $\mathcal{R}(\pi_0)\not\subseteq A_{\pi}^{-1} \mathcal{R}(\pi)$ and therefore \[\mathrm{dim}(\mathcal{R}(\pi_0\pi))= \mathrm{dim}(A_{\pi} \mathcal{R}(\pi_0) + \mathcal{R}(\pi))>\mathrm{dim}(\mathcal{R}(\pi)),\] concluding the proof of the lemma.
\end{proof}

\begin{proof}[Proof of Theorem~\ref{main++2}]
Let us  prove Item \textit{1.} By applying Lemma~\ref{concatenate2} iteratively we can concatenate at most $\mathrm{dim}(V_1)$ sequences of length one (i.e., symbols in $\Sigma$) to obtain a sequence $\pi_1$ such that $\mathrm{dim}(\mathcal{R}(\pi_1))  \geq \dim(V_1)$. Similarly we may find a concatenation $\pi_2$ of at most  $\mathrm{dim}(V_2)-\mathrm{dim}(V_1)$ sequences of length no greater than two such that $\mathrm{dim}(\mathcal{R}(\pi_2 \pi_1))  \geq \dim(V_2)$. More in general, for $k>2$ we construct recursively $\pi_k$ of total length bounded by $k \big(\mathrm{dim}(V_k)-\mathrm{dim}(V_{k-1})\big)$ such that $\mathrm{dim}(\mathcal{R}(\pi_k \cdots \pi_1))  \geq \dim(V_k)$. Hence if $\ell$ is as in Lemma~\ref{nested}, that is if $V_\ell =\mathbb{R}^n$, we have $\mathcal{R}(\pi_\ell \cdots \pi_1)=\mathbb{R}^n$, i.e., $\pi=\pi_\ell \cdots \pi_1$ is a controllable sequence. Then the total length of $\pi$ is bounded by 
\begin{align*}
\sum_{k= 1}^\ell k & \big(\mathrm{dim}(V_k)-\mathrm{dim}(V_{k-1})\big)\\  &= \sum_{h=0}^{\ell-1} \big(\mathrm{dim}(V_\ell)-\mathrm{dim}(V_{h})\big)\\
& \leq n+(n-1)+\dots+(n-\ell+1) \\
&\leq  \frac{n(n+1)}{2},
\end{align*}
with equality holding if $\mathrm{dim}(V_k) = k$ for $k\leq \ell=n$. This concludes the proof of Item \textit{1.}

{We are left to prove Item \textit{2.}
Let us first consider case $(a)$. The case $m=1$ being trivial, we construct, inspired by the work in~\cite{zhu2024minimum}, for any given $n\geq m \geq 2$, an example of controllable system of the form~\eqref{slcs} such that the corresponding controllable sequences satisfy the required lower bound.
For $k\in\mathbb{N}^*$, let us consider the $k\times k$ matrix
\begin{equation}\label{Pk}
P_k=\begin{pmatrix} 
 0  & 1\\
\mathrm{Id}_{k-1} & 0
 \end{pmatrix}
\end{equation}
with the convention $P_1=1$.

Define the matrices $(A_k,B_k)$ as
\begin{align*}A_1&=\begin{pmatrix} 
P_{n-m+1} & 0\\
0 & \mathrm{Id}_{m-1}
 \end{pmatrix},\qquad B_1=e_1\\
 A_k&=\begin{pmatrix} 
\mathrm{Id}_{n-m+k-2} & 0 & 0\\
0 & P_2 & 0\\
0 & 0 & \mathrm{Id}_{m-k}
 \end{pmatrix},\qquad B_k=0,
 \end{align*}
for $k=2,\dots,m$.
In particular, applying the first mode starting from the space span$\{e_{1},\dots,e_{j}\}$ allows us to reach span$\{e_1,\dots,e_{j+1}\}$ if $j\leq n-m$.
The other modes, when applied to a space of the form $ \mathrm{span}\{e_{i_1},\dots,e_{i_h}\}$ for some indices $i_1,\dots,i_h$, either let the space unchanged or they  raise or lower one of the indices by one unit.
We construct a controllable sequence as follows.
We first apply $n-m+1$ times the mode 1. Starting from the origin this allows us to generate the space span$\{e_{1},\dots,e_{n-m+1}\}$, i.e., indicating by $\pi_1$ the corresponding word we get $\mathcal{R}(\pi_1) = \mathrm{span}\{e_{1},\dots,e_{n-m+1}\}$. We then apply sequentially the modes $2,3,\dots,m$, i.e., the sequence $\pi_2 = 23\cdots m$, to obtain the space $\mathcal{R}(\pi_1\pi_2) = \mathrm{span}\{e_1,\dots,e_{n-m},e_n\}$. Next, considering the sequence $\pi_3=12\cdots (m-1)$ one has $\mathcal{R}(\pi_1\pi_2\pi_3) = \mathrm{span}\{e_1,\dots,e_{n-m},e_{n-1},e_n\}$. More in general, defining recursively $\pi_k = 12\cdots (m+2-k)$ for $k\leq m+1$ we obtain  $\mathcal{R}(\pi_1\cdots\pi_k) = \mathrm{span}\{e_1,\dots,e_{n-m},e_{n-k+2},\dots,e_n\}$. In particular $\mathcal{R}(\pi_1\cdots\pi_{m+1}) = \mathbb{R}^n$, that is, $\pi := \pi_1\cdots\pi_{m+1}$ is a controllable sequence. The total length of the sequence $\pi$ is 
\begin{align*}
(n&-m+1)+(m-1)+\sum_{k=3}^{m+1}(m+2-k)\\
&= n+\frac{m(m-1)}{2}.
\end{align*}
To prove that this represents the minimal length for controllable sequences for the given switched linear control system, we associate with the $i$-th element of the basis the weight 
 \begin{equation*}
 \nu_i=
 \begin{cases}
 1&\text{ if }i=1,\dots,n-m+1,\\
 i+m-n&\text{ if }i=n-m+2,\dots,n
 \end{cases}
 \end{equation*}
We then define \[\nu( \mathrm{span}\{e_{i_1},\dots,e_{i_h}\}) = \sum_{j=1}^h \nu_{i_j}.\]
It is easy to check that, for every $k=1,\dots,m$, the value $\nu$ increases at most by one unit when one applies mode $k$ to a space  $ \mathrm{span}\{e_{i_1},\dots,e_{i_h}\}$ for every (possibly empty) set of distinct indices $\{i_1,\dots,i_h\}\subseteq\{1,\dots,n\}$.
Hence to reach $\mathbb{R}^n$ from the origin we need to apply sequentially at least $\nu(\mathbb{R}^n)$ modes, and we have
\begin{align*}
\nu(\mathbb{R}^n) &= n-m + \sum_{i=1}^m i \\
& = n +\frac{m(m-1)}{2}. 
\end{align*}
We have then obtained the shortest length of controllable sequences, which concludes the proof of case $(a)$.
}

It remains to prove case $(b)$.
It should be noted that the sought lower bound on the maximal length of controllable sequences is a nondecreasing function of the number of modes $m$ (assuming the dimension $n$ fixed) since, for instance, duplicating a mode does not change the maximal length of controllable sequences. 
Then, case $(b)$ follows from case $(a)$ with $m=n$.
\end{proof}

Item \textit{1.} of Theorem~\ref{main++2} provides an upper bound on the length of the shortest controllable sequence that is independent of the number of modes $m$.  
Let us mention that the proof describes an effective method to construct a controllable sequence whose length is smaller than $\frac{(n+1)n}{2}$.
It then follows from Item \textit{2.} that this bound is actually optimal when $m\ge n$. 
When $m<n$, the upper bound given by Item \textit{1.} is probably not tight, as evidenced by the discrepancy with the lower bound provided by Item \textit{2.} 
It is clearly not tight for $m=1$.
Note that the lower bound provided by Item \textit{2.} is not tight when $m<n$ as shown in the following example.
\smallskip

\begin{figure*}
\begin{center}
    \includegraphics[width=0.9\textwidth]{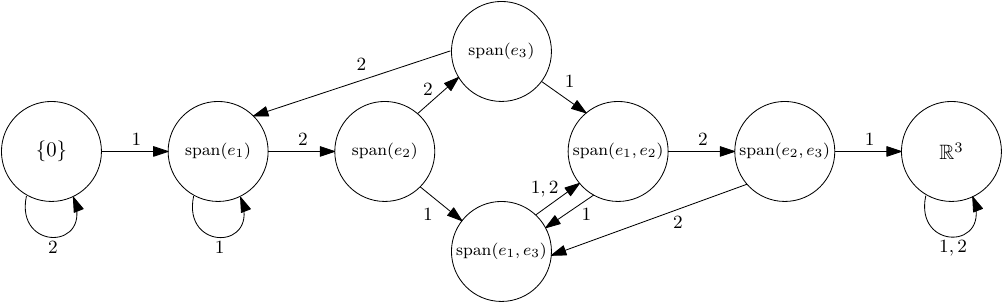}
    \caption{Automaton describing the evolution of the reachable space under all possible switching signals for system in Example~\ref{example}.\label{fig}}
\end{center}
\end{figure*}

\begin{example}\label{example}
We consider the system with $n=3$ and $m=2$, given by the matrices:
$$
A_1=\left(\begin{matrix}
    1 & 0 & 0 \\
    0 & 0 & 1 \\
    0 & 1 & 0
\end{matrix}\right),\;
A_2=\left(\begin{matrix}
    0 & 0 & 1 \\
    1 & 0 & 0 \\
    0 & 1 & 0
\end{matrix}\right),\;
B_1=\left(\begin{matrix}
    1  \\
    0  \\
    0 
\end{matrix}\right),\;
B_2=\left(\begin{matrix}
    0  \\
    0  \\
    0 
\end{matrix}\right).
$$
This system have the following controllable sequences $122121$, $121121$, $121221$ of length $6$, and no controllable sequence of length $5$ or less. This can be checked using the automaton in Figure~\ref{fig}, which describes the evolution of the reachable space under all possible switching signals. The shortest paths in the automaton from the null space to $\mathbb R^3$ correspond to the shortest controllable sequences.
Let us remark that their length  corresponds to the upper bound  $\frac{n(n+1)}{2}=6$ while being strictly greater than the lower bound $n+\frac{m(m-1)}{2}=4$.
\end{example}
\medskip

\begin{remark}
\label{rem-equivalent}
The assumption that all matrices $A_i$ are invertible may be replaced by the relaxed assumption that each $(A_i,B_i)$ has maximal rank, i.e., $\mathrm{Im}(A_i)+\mathrm{Im}(B_i) = \mathbb{R}^n$. Indeed, in the latter case one can consider a switched linear control system corresponding to pairs $(\hat A_i,B_i)$ where $\hat A_i = A_i+B_i K_i$, with $K_i$ matrix gains of appropriate dimensions. Up to appropriately choosing $K_i$, one has that $\hat A_i$ is invertible and, furthermore, the modified system gives rise to the same trajectories as the initial one (with the control inputs $u_k$ replaced by $\hat u_k=u_k-K_{i_k}x_{k-1}$). Therefore Theorems~\ref{main++} and ~\ref{main++2} apply to the modified system and hence to the initial system with the relaxed assumption.

On the other hand a straightforward necessary condition for~\eqref{slcs} to admit a controllable sequence is that there exists at least one mode (the last one in  controllable sequences) satisfying $\mathrm{Im}(A_i)+\mathrm{Im}(B_i) = \mathbb{R}^n$. 
Note that the fact, shown in Item \textit{1.} of Theorem~\ref{main++2} for invertible matrices $A_i$, that every controllable system~\eqref{slcs} admits a controllable sequence holds true even in the presence of degenerate matrices $A_i$. This result, already shown in~\cite{stanford1980controllability}, can be obtained by observing that $\mathcal{R}$ is a countable union of subspaces $\mathcal{R}(\pi)$ of $\mathbb{R}^n$, for $\pi\in\Sigma^*$, hence it is a measurable set of Lebesgue measure zero unless one of such subspaces is equal to $\mathbb{R}^n$.
\end{remark} 

\section{Rank-dependent bounds}
\label{sec:rank}

We now focus on the problem of estimating the minimal length of controllable sequences following the point of view of~\cite{conner1984state,conner1987structure}. In those papers, the authors introduce an index $\mathcal B(n,r,m)$  which maximizes the minimal length of controllable sequences among all switched linear control systems evolving on $\mathbb{R}^n$, consisting of $m$ modes, such that every $A_i$ is invertible and every $B_i$ has rank $r$.
They obtain the exact expression or some bounds of $\mathcal B(n,r,m)$  in some special cases, namely in \cite{conner1984state} they prove that \[\mathcal B(n,n-1,m)=2\ \mbox{ and }\ \mathcal B(3,1,m) = 4\ \mbox{ if }\ m\geq 2,\] while in~\cite{conner1987structure} they show that \[\mathcal B(n,r,m) \geq 2n-2r\ \mbox{ if }\ m\geq 2.\]
{In the following we will also consider the slightly different quantity $\bar{\mathcal B}(n,r,m)$ where the maximization is done over all switched linear control systems evolving on $\mathbb{R}^n$, consisting of $m$ modes, such that every $A_i$ is invertible and every $B_i$ has rank \emph{at least} $r$. Note that estimates of $\bar{\mathcal B}(n,0,m)$ are obtained in Theorem~\ref{main++2} for every $n,m$, while it does not make sense to consider $\mathcal B(n,0,m)$ for lack of controllability of the corresponding switched systems.
A general result is given below. 
\smallskip

\begin{theorem}
\label{th-conner}
Let $r\geq 1$. We have 
\begin{equation}\label{eq-bnrm-1}
\mathcal B(n,r,m) \leq \bar{\mathcal B}(n,r,m) \leq \frac{(n-r+1)(n-r)}{2}+1
\end{equation}
for every $m$ and $r<n$.
Furthermore, if $m\leq n-r$ the following inequalities hold
\begin{equation}\label{eq-bnrm-2}
n-r+1+\frac{m(m-1)}{2} \leq \mathcal B(n,r,m) \leq \bar{\mathcal B}(n,r,m).
\end{equation}
As a consequence, equalities hold in \eqref{eq-bnrm-1} if  $m\geq n-r$.
\end{theorem}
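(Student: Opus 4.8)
The plan is to reduce Theorem~\ref{th-conner} to the results already established in Theorem~\ref{main++2}, exploiting the fact that a rank-$r$ control matrix $B_i$ effectively lets us set $r$ coordinates ``for free'' at the last step, so that controllability on $\mathbb{R}^n$ with rank-$r$ matrices behaves like controllability on $\mathbb{R}^{n-r}$ with rank-(at least) $1$ matrices. More precisely, for the upper bound \eqref{eq-bnrm-1} I would take any controllable system~\eqref{slcs} on $\mathbb{R}^n$ with $A_i$ invertible and $\mathrm{rank}(B_i)\ge r$, fix one mode, say mode $1$, with $\mathrm{rank}(B_1)=r_1\ge r$ (such a mode must be usable last, cf.\ Remark~\ref{rem-equivalent}), and use the characterization $\mathcal R=V_\ell$ from Theorem~\ref{main++}. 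Writing $W=\mathrm{Im}(B_1)$, a space of dimension $r_1\ge r$, I would run the very same greedy construction as in the proof of Item~\textit{1.} of Theorem~\ref{main++2}, but starting the filtration from $V_1'=W$ instead of $V_1=\sum_i\mathrm{Im}(B_i)$: since $\dim V_1'\ge r$, the chain $V_1'\subsetneq V_2'\subsetneq\cdots$ stabilises at $\mathbb{R}^n$ after at most $n-r$ strict increases, and the length bound from the telescoping sum becomes $\sum_{k=1}^{\ell'}k\bigl(\dim V_k'-\dim V_{k-1}'\bigr)\le (n-r+1)+(n-r)+\cdots$, which one must compare carefully with $\frac{(n-r+1)(n-r)}{2}+1$. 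The ``$+1$'' accounts for the fact that the first symbol (mode $1$) already contributes dimension $r\ge1$ at cost one, while the remaining at most $n-r$ dimensions are filled at cost at most $2,3,\dots,n-r+1$ each, giving total at most $1+\sum_{j=2}^{n-r+1}j = \frac{(n-r+1)(n-r)}{2}+1$. The main technical point is to verify that the greedy lemma (Lemma~\ref{concatenate2}) applies verbatim to the shifted filtration $V_k'$, i.e.\ that $\mathrm{Im}(B_1)$ can legitimately replace $V_1$ as the base of the recursion when mode $1$ is forced to be the terminal symbol of the constructed sequence; this requires re-examining Lemma~\ref{concatenate} and the argument that concatenating on the \emph{left} of a fixed suffix increases the reachable dimension.

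For the lower bound in \eqref{eq-bnrm-2} I would exhibit, for each $n,r,m$ with $1\le m\le n-r$, an explicit controllable system with $A_i$ invertible and $\mathrm{rank}(B_i)=r$ whose shortest controllable sequence has length exactly $n-r+1+\frac{m(m-1)}{2}$. The natural candidate is the block construction $A_i' = A_i\oplus \mathrm{Id}_{r-1}$, $B_i' = B_i\oplus (\text{the }(r-1)\times(r-1)\text{ identity block, suitably embedded})$, where $(A_i,B_i)$ is the $(n-r+1)$-dimensional, rank-one family built in case~$(a)$ of the proof of Theorem~\ref{main++2}. The extra $r-1$ dimensions are reached instantaneously (at the first application of any mode, since they sit in the image of every $B_i'$), so the effective problem is the $(n-r+1)$-dimensional one with $m\le (n-r+1)-0$ modes, whose minimal controllable-sequence length is $(n-r+1)+\frac{m(m-1)}{2}$ by case~$(a)$ — note the role substitution $n\rightsquigarrow n-r+1$. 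I would need to recheck the hypothesis of case~$(a)$: it requires $m\le n-r+1$, and indeed $m\le n-r$ implies this; and I would re-run the weight function $\nu$ argument on the $(n-r+1)$-block, assigning weight $0$ to the $r-1$ free coordinates, to confirm the lower bound survives the embedding.

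Finally, the ``as a consequence'' clause: when $m\ge n-r$, I want equalities throughout \eqref{eq-bnrm-1}. The upper bound is already \eqref{eq-bnrm-1}; for the matching lower bound I would invoke the monotonicity remark used at the end of the proof of Theorem~\ref{main++2} (duplicating a mode does not change the maximal length of controllable sequences, so $\mathcal B(n,r,m)$ and $\bar{\mathcal B}(n,r,m)$ are nondecreasing in $m$), which reduces the case $m\ge n-r$ to the case $m=n-r$; there \eqref{eq-bnrm-2} gives $\mathcal B(n,r,n-r)\ge (n-r+1)+\frac{(n-r)(n-r-1)}{2} = \frac{(n-r+1)(n-r)}{2}+1$, meeting the upper bound. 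I expect the main obstacle to be the bookkeeping in the upper-bound argument — specifically, justifying rigorously that fixing a maximal-rank terminal mode and basing the greedy filtration at $\mathrm{Im}(B_1)$ is legitimate, and tracking the off-by-one ``$+1$'' correctly through the telescoping sum so that the bound is $\frac{(n-r+1)(n-r)}{2}+1$ and not $\frac{(n-r+1)(n-r+2)}{2}$ or similar. Everything else is a careful but routine adaptation of the machinery already in place.
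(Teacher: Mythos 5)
There is a genuine gap in your upper-bound argument, concentrated in the case where all the images $\mathrm{Im}(B_i)$ coincide, i.e.\ $\dim(V_1)=r$ with $V_1$ as in \eqref{eq:V1}. Your plan is to seed the greedy filtration at $W=\mathrm{Im}(B_1)$ and run the telescoping argument of Theorem~\ref{main++2}. Two things go wrong. First, the shifted filtration $V_1'=W$, $V_{k+1}'=V_k'+\sum_iA_iV_k'$ need not stabilise at $\mathbb{R}^n$ at all: it converges to the smallest $(A_1,\dots,A_m)$-invariant subspace containing $\mathrm{Im}(B_1)$ only, which can be proper even for a controllable system (the other $B_i$'s may be needed to generate the rest of the space), so Lemma~\ref{concatenate2} must be run against the full filtration $V_k$. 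Second, and more seriously, once you do use the full $V_k$, the telescoping sum in the case $\dim(V_1)=r$ evaluates to $n-r+1+\sum_{k=2}^{\ell}(n-\dim V_{k-1})\le n-r+1+\frac{(n-r)(n-r+1)}{2}$, which overshoots the target $\frac{(n-r+1)(n-r)}{2}+1$ by exactly $n-r$; your own accounting ``costs $2,3,\dots,n-r+1$'' gives $1+\sum_{j=2}^{n-r+1}j=\frac{(n-r+1)(n-r+2)}{2}$, not $\frac{(n-r+1)(n-r)}{2}+1$, confirming the off-by-$(n-r)$ discrepancy you yourself flagged but did not resolve. The greedy argument alone suffices only when $\dim(V_1)>r$ (then the first stage costs $\dim(V_1)-r+1$ and the remaining gaps $n-\dim V_{k-1}$ are at most $n-r-1, n-r-2,\dots$). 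When $\dim(V_1)=r$ a genuinely different idea is required: the paper changes coordinates via Lemma~\ref{l-transform} so that the bottom-right $(n-r)\times(n-r)$ blocks $\hat A_k$ of all the $A_k$ are simultaneously invertible, observes that the unactuated components $\hat x$ then obey a switched linear control system on $\mathbb{R}^{n-r}$ whose input $\hat u_j=y_{j-1}$ is freely assignable for $j\ge 2$, and applies Theorem~\ref{main++2} to that reduced system (dimension $n-r$, hence bound $\frac{(n-r)(n-r+1)}{2}$, plus one wasted step since $\hat u_1=0$), recovering the $y$-coordinates with the final input. Your opening heuristic (``rank-$r$ inputs make $\mathbb{R}^n$ behave like $\mathbb{R}^{n-r}$'') is precisely this reduction, but your proposal never formalises it, and the mechanism you do propose cannot reach the stated bound.

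The lower bound and the ``as a consequence'' clause are essentially right and match the paper's route, with one defect in the construction: taking $B_i'=B_i\oplus\mathrm{Id}_{r-1}$ with $(A_i,B_i)$ the family from case~$(a)$ of Theorem~\ref{main++2} produces $\mathrm{rank}(B_k')=r-1$ for every $k\ge 2$, since $B_k=0$ there; this violates the rank-$r$ constraint defining $\mathcal B(n,r,m)$. The fix is to give every mode the same rank-$r$ input matrix $\bigl(\begin{smallmatrix}\mathrm{Id}_r\\0\end{smallmatrix}\bigr)$ and shrink the cyclic block of $A_1$ to size $n-r-m+2$, after which the weight-function argument (with weight $0$ on $e_1,\dots,e_{r-1}$) goes through and yields minimal length $n-r+1+\frac{m(m-1)}{2}$; this is what the paper does, and your derivation of the equality case $m\ge n-r$ from \eqref{eq-bnrm-2} via mode duplication is then correct as written.
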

}

In order to prove Theorem~\ref{th-conner}, we will need the following preliminary result.
\smallskip

\begin{lemma}
\label{l-transform}
Assume that $M$ is a $n\times n$ invertible matrix. Then, defining  $\mathcal{M}$ as the set of  all matrices $P\in\mathbb{R}^{r\times (n-r)}$ such that 
\[\begin{pmatrix}\mathrm{Id}_{r} & P\\ 0 & \mathrm{Id}_{n-r}\end{pmatrix}^{-1}M \begin{pmatrix}\mathrm{Id}_{r} & P\\ 0 & \mathrm{Id}_{n-r}\end{pmatrix} = \begin{pmatrix} * & *\\ * & \hat{M} \end{pmatrix}\]
for some invertible $(n-r)\times (n-r)$ matrix $\hat{M}$, we have that $\mathcal{M}$
is open and dense in $\mathbb{R}^{r\times (n-r)}$.
\end{lemma}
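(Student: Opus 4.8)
The plan is to analyze the $(2,2)$-block of the conjugated matrix explicitly and show that the condition "$\hat M$ invertible" is the non-vanishing of a polynomial in the entries of $P$, after which openness is automatic and density reduces to showing this polynomial is not identically zero. First I would compute the conjugation directly. Writing $N_P = \begin{pmatrix}\mathrm{Id}_r & P\\ 0 & \mathrm{Id}_{n-r}\end{pmatrix}$, one has $N_P^{-1} = \begin{pmatrix}\mathrm{Id}_r & -P\\ 0 & \mathrm{Id}_{n-r}\end{pmatrix}$, so if $M = \begin{pmatrix} M_{11} & M_{12}\\ M_{21} & M_{22}\end{pmatrix}$ in the same block decomposition, a short computation gives that the lower-right block of $N_P^{-1} M N_P$ equals $\hat M(P) = M_{22} - M_{21}P$. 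Thus $\mathcal M = \{P \in \mathbb{R}^{r\times(n-r)} \mid \det(M_{22} - M_{21}P)\neq 0\}$, which is the complement of the zero set of the polynomial $Q(P) = \det(M_{22}-M_{21}P)$ in the entries of $P$; hence $\mathcal M$ is open, and it is dense (in fact its complement has measure zero) as soon as $Q$ is not the zero polynomial.

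The remaining task is therefore to produce a single matrix $P$ with $\det(M_{22}-M_{21}P)\neq 0$. The key observation is that invertibility of $M$ forces the $r\times n$ matrix $\begin{pmatrix} M_{21} & M_{22}\end{pmatrix}$... wait — rather, invertibility of $M$ forces the $(n-r)\times n$ matrix $\begin{pmatrix} M_{21} & M_{22}\end{pmatrix}$ to have full row rank $n-r$. Equivalently, the linear map $\mathbb{R}^n \to \mathbb{R}^{n-r}$, $(x,y)\mapsto M_{21}x + M_{22}y$, is surjective. I would argue that surjectivity of this map implies one can choose $P$ so that $y\mapsto (M_{22}-M_{21}P)y$ is surjective, hence (being a square map) invertible. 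Concretely: pick a basis $f_1,\dots,f_{n-r}$ of $\mathbb{R}^{n-r}$; by surjectivity there are vectors $(x_j, y_j)\in\mathbb{R}^r\times\mathbb{R}^{n-r}$ with $M_{21}x_j + M_{22}y_j = f_j$. If the $y_j$ happened to form a basis of $\mathbb{R}^{n-r}$ we could take $P$ to be the matrix sending $y_j \mapsto -x_j$, giving $(M_{22}-M_{21}P)y_j = f_j$ and we are done; the general case requires a small perturbation/linear-algebra argument to arrange that the relevant combination is nonsingular.

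The main obstacle is precisely this last step — turning "the combined map on $\mathbb{R}^r\times\mathbb{R}^{n-r}$ is surjective" into "there is an honest $P$ making the $(n-r)\times(n-r)$ map invertible", because the $y_j$ need not be independent and $P$ is constrained to be a genuine linear map (not an arbitrary assignment on a dependent set). A clean way around it: observe that $\det(M_{22}-M_{21}P)$, viewed as a polynomial in $P$, vanishing identically would mean $M_{22}-M_{21}P$ is singular for \emph{every} $P$; take the union over all $P$ of the column spaces of $M_{21}P$ — since $P$ ranges over all of $\mathbb{R}^{r\times(n-r)}$ and, by full row rank of $\begin{pmatrix}M_{21}&M_{22}\end{pmatrix}$, the columns of $M_{21}$ together with the columns of $M_{22}$ span $\mathbb{R}^{n-r}$, one can choose $P$ to make the columns of $M_{22}-M_{21}P$ span whatever complement is needed, contradicting permanent singularity. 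I would phrase this as: if $v^\top(M_{22}-M_{21}P)=0$ for all $P$ forces $v=0$ at a fixed $P$... Actually the cleanest finish is a dimension count — I would set it up so that the failure of density yields a nonzero $v\in\mathbb{R}^{n-r}$ with $v^\top M_{22} = 0$ and $v^\top M_{21} = 0$ (obtained by choosing $P$ to isolate each entry of $M_{21}$), i.e. $v^\top\begin{pmatrix}M_{21}&M_{22}\end{pmatrix}=0$, contradicting full row rank and hence invertibility of $M$. This packages the whole argument without the awkward basis manipulation.
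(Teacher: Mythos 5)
Your overall strategy is the same as the paper's: compute the lower-right block of the conjugated matrix explicitly, observe that it is affine in $P$ so that $\det \hat M(P)$ is a polynomial in the entries of $P$, conclude openness immediately, and reduce density to exhibiting a single $P$ for which $\hat M(P)$ is invertible. (Minor point: with $M=\left(\begin{smallmatrix}M_{11}&M_{12}\\ M_{21}&M_{22}\end{smallmatrix}\right)$ the block is $M_{22}+M_{21}P$, not $M_{22}-M_{21}P$; since $P$ ranges over all of $\mathbb{R}^{r\times(n-r)}$ this sign slip is harmless.)

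The gap is at the crux, namely the nonemptiness of $\mathcal M$. Your first attempt (lift a basis $f_j=M_{21}x_j+M_{22}y_j$ and define $P:y_j\mapsto -x_j$) fails exactly where you say it does, since the $y_j$ need not be independent. But your ``cleanest finish'' does not repair this: from ``$M_{22}+M_{21}P$ is singular for \emph{every} $P$'' you claim a single nonzero $v$ with $v^\top M_{21}=0$ and $v^\top M_{22}=0$, ``obtained by choosing $P$ to isolate each entry of $M_{21}$.'' This is a quantifier inversion: permanent singularity only gives, for each $P$, some left null vector $v_P$ of $M_{22}+M_{21}P$, and $v_P$ varies with $P$; isolating entries of $M_{21}$ with different choices of $P$ produces different null vectors and no common one. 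The implication you want (permanent singularity $\Rightarrow$ common left null vector of $(M_{21}\ M_{22})$) is in fact true, but proving it is essentially equivalent to the hard step itself, so as written the argument is circular. The paper closes this gap constructively: take a maximal independent set of columns of $M_{22}$ (say all but the columns indexed by $i_1,\dots,i_p$), use the full row rank of $(M_{21}\ M_{22})$ to complete them to a basis of $\mathbb{R}^{n-r}$ with columns $j_1,\dots,j_p$ of $M_{21}$, and set $\bar P=\sum_k e_{j_k}e_{i_k}^\top$, so that $M_{21}\bar P+M_{22}$ adds the $j_k$-th column of $M_{21}$ to the $i_k$-th column of $M_{22}$ and is therefore invertible. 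Some such explicit (or dimension-counting, e.g.\ choosing the graph of $P$ as a common complement of $\mathbb{R}^r\times\{0\}$ and of the kernel of $(x,y)\mapsto M_{21}x+M_{22}y$) argument is needed to finish your proof.
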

\begin{proof}
Writing $M=\left(\begin{smallmatrix} A & B\\ C & D\end{smallmatrix}\right)$ where $A\in\mathbb{R}^{r\times r}$, $B\in\mathbb{R}^{r\times (n-r)}$, $C\in\mathbb{R}^{(n-r)\times r}$, $D\in\mathbb{R}^{(n-r)\times (n-r)}$, we get, by a direct computation, $\hat M = CP+D$. 
We start by looking for a matrix $P$ such that $\hat M$ is invertible. If $D$ is invertible then one can choose $P=0$, hence we assume that the rank of $D$ is $n-r-p$ for some positive $p$. In particular there exists $i_1,\dots,i_p$ in $\{1,\dots,n-r\}$ such that the $i_1$-th,\dots,$i_p$-th columns of $D$ can be expressed as a linear combination of the remaining linearly independent $n-r-p$ columns of $D$. 
Since $M$ is invertible it follows that the $(n-r)\times n$ matrix $(C\ D)$ has rank $n-r$. In particular we can assume that the $j_1$-th,\dots,$j_p$-th columns of $C$ form, together with the $n-r-p$ already identified linearly independent columns of $D$, a basis of $\mathbb{R}^{n-r}$. Setting $\bar{P} = \sum_{k=1}^p e_{j_k}e_{i_k}^{\top}$ (here $e_{j_k},e_{i_k}$ are elements of the canonical bases of $\mathbb{R}^{r}$ and $\mathbb{R}^{n-r}$, respectively) it follows that adding $C\bar{P}$ to $D$ amounts to adding the $j_k$-th column of $C$ to the $i_k$-th column of $D$ for $k=1,\dots,p$, keeping the remaining columns of $D$ unchanged. The columns of the resulting sum are then linearly independent, proving the invertibility of $C\bar{P}+D$. We have proved that the set $\mathcal{M}$ is always nonempty.
We then observe that $\mathrm{det}(CP+D)$ is a nonzero polynomial function of the entries of $P\in\mathbb{R}^{r\times (n-r)}$. In particular, it is nonzero on an open and dense subset of $\mathbb{R}^{r\times (n-r)}$. The latter set coincides with $\mathcal{M}$, and this concludes the proof of the lemma.
\end{proof}
We can now prove the theorem.

\begin{proof}[Proof of Theorem~\ref{th-conner}]
Note that the inequality
\[\mathcal B(n,r,m)\leq \bar{\mathcal B}(n,r,m)\]
is always verified, since $\bar{\mathcal B}(n,r,m)$ is defined as a maximum over a larger family of controllable systems of the form \eqref{slcs}, compared to $\mathcal B(n,r,m)$.

Let us now prove~\eqref{eq-bnrm-1}.
Recall that, in order to estimate {$\bar{\mathcal B}(n,r,m)$}, we have to assume that the matrices $A_k$ are invertible and that {$\mathrm{rank}(B_k) \geq  r$} for every $k=1,\dots,m$. First, consider the case $\mathrm{dim}(V_1)>r$, where $V_1$ is as in \eqref{eq:V1}. 
Using recursively Lemma~\ref{concatenate2} and the fact that each $B_i$ has rank {at least} $r$ (that is, {$\mathrm{dim}(\mathcal{R}(i)) \geq r$} for every $i\in \{1,\dots,m\}$) it is possible to construct a sequence $\pi_0$ of length at most $\mathrm{dim}(V_1) - r+1$ such that $\mathrm{dim}(\mathcal{R}(\pi_0))\geq \mathrm{dim}(V_1)$. Following  
 the proof of Theorem~\ref{main++2}, it is possible to extend $\pi_0$ to a controllable sequence of length no larger than 
 \begin{align*}
 \mathrm{dim}&(V_1)  - r+1+\sum_{k = 2}^{\ell} k \big(\mathrm{dim}(V_k)-\mathrm{dim}(V_{k-1})\big)\\
 &= \mathrm{dim}(V_\ell) - r+1+\sum_{k = 2}^{\ell} \big(\mathrm{dim}(V_{\ell})-\mathrm{dim}(V_{k-1})\big)\\
& \leq n - r+1+\sum_{h = 1}^{n-\mathrm{dim}(V_1)} h\\
& \leq n - r+1+\sum_{h = 1}^{n-r-1} h\\
& = \frac{(n-r+1)(n-r)}{2} + 1.
\end{align*}

Let us consider now the case $\mathrm{dim}(V_1) = r$. Then, without loss of generality, we can assume that the last $n-r$ rows of $B_k$ are zero for every $k=1,\dots,m$, i.e., $B_k = \left(\begin{smallmatrix} B_k' \\ 0_{(n-r)\times r}\end{smallmatrix}\right)$,  $B'_k$ being an invertible $r\times r$ matrix.
{By Lemma~\ref{l-transform}, for $k=1,\dots,m$ there exists a set $\mathcal{M}_k$ open and dense in $\mathbb{R}^{r\times (n-r)}$ such that, for every $P\in \mathcal{M}_k$, the coordinate transformation $Q=\left(\begin{smallmatrix}\mathrm{Id}_r & P\\ 0 & \mathrm{Id}_{n-r}\end{smallmatrix}\right)$ is such that, writing
\[Q^{-1}A_kQ=\begin{pmatrix}\hat D_k & \hat C_k\\ \hat B_k & \hat A_k\end{pmatrix},\] 
the matrix $\hat A_k$ is invertible. As the intersection of the sets $\mathcal{M}_k$ is nonempty (it is open and dense in $\mathbb{R}^{r\times (n-r)}$), we can take $P\in \cap_{k=1}^m \mathcal{M}_k$ ensuring that, after applying the corresponding coordinate transformation, all bottom-right blocks $\hat A_k$ are invertible.}
We denote by 
$x=
\left(
\begin{smallmatrix}
    y \\ \hat x
\end{smallmatrix}\right)$
 the state expressed in this coordinate basis, with $y\in \mathbb{R}^{r}$ and $\hat x\in\mathbb{R}^{n-r}$. Then, the dynamics corresponding to a sequence $i_1\cdots i_{k}$ is given by
\begin{equation}\label{slcs-change}
\begin{split}
y_{j} &= \hat D_{i_j} y_{j-1}+ \hat C_{i_j}\hat x_{j-1} + B'_{i_j} u_j,\\
 \hat x_{j} &= \hat B_{i_j} y_{j-1} + \hat A_{i_j}\hat x_{j-1}.
\end{split}
\end{equation}
In other words the components $\hat x$ follow the dynamics of a switched linear control system  
\begin{equation}
\label{eq-reduced}
\hat x_{j} =  \hat A_{i_j}\hat x_{j-1} + \hat B_{i_j} \hat u_j,
\end{equation}
where $\hat u_j = y_{j-1}$ is zero if $j=1$ and it is equal to $\hat D_{i_{j-1}} y_{j-2}+ \hat C_{i_{j-1}}\hat x_{j-2} + B'_{i_j} u_{j-1}$ otherwise. {We have that the reduced system~\eqref{eq-reduced} is controllable since~\eqref{slcs-change} is. Also, note that for every choice of $\hat u_j\in\mathbb{R}^r$ for $j>1$ there corresponds a unique choice $u_{j-1} =(B'_{i_j})^{-1}(\hat u_j - \hat D_{i_{j-1}} y_{j-2} - \hat C_{i_{j-1}}\hat x_{j-2})$ for~\eqref{slcs-change}, hence any arbitrary choice of the control sequence $\{\hat u_j\}_{j\geq 2}$ in \eqref{eq-reduced} gives rise to a trajectory of~\eqref{slcs-change}.}
Since $\hat u_1= 0$ in~\eqref{eq-reduced}, so that $\hat x_1=0$, it follows from Theorem~\ref{main++2} the existence of a controllable sequence for \eqref{eq-reduced} of length $\bar k\leq \frac{(n-r+1)(n-r)}{2}+1$. 
{This is equivalent to say that we can find a switching sequence of length $\bar k$ such that for every $\hat x\in \mathbb{R}^{n-r}$ there exists a state of the form $x=
\left(
\begin{smallmatrix}
    y \\ \hat x
\end{smallmatrix}\right)$
 reachable from the origin with the dynamics~\eqref{slcs-change} at time $\bar k$. As the choice of $u_{\bar k}$ does not affect the value $\hat x_{\bar k}$, by choosing $u_{\bar k} = (B'_{i_{\bar k}})^{-1} \left(y - \hat D_{i_j} y_{j-1} - \hat C_{i_j}\hat x_{j-1}\right)$ we can actually reach any point 
$
\left(
\begin{smallmatrix}
    y \\ \hat x
\end{smallmatrix}\right)\in \mathbb{R}^n$ showing that the considered switching sequence is controllable.
We have thus proved that the length of controllable sequences is bounded by $\frac{(n-r+1)(n-r)}{2}+1$ when $\mathrm{dim}(V_1)=r$, and this concludes the proof of~\eqref{eq-bnrm-1}.

In order to prove~\eqref{eq-bnrm-2} we show that the switched linear control system corresponding to the pairs $(A_k,B_k)$ where $B_k = \left(\begin{smallmatrix} \mathrm{Id}_r \\ 0_{(n-r)\times r}\end{smallmatrix}\right)$ for $k=1,\dots,m$ and
\begin{align*}
A_1&=\begin{pmatrix}\mathrm{Id}_{r-1} & 0 & 0\\ 0 & P_{n-r-m+2} & 0\\ 0 & 0 & \mathrm{Id}_{m-1}\end{pmatrix},\\
A_k&=\begin{pmatrix}\mathrm{Id}_{n-m+k-2} & 0 & 0\\ 0 & P_{2} & 0\\ 0 & 0 & \mathrm{Id}_{m-k}\end{pmatrix},\quad k=2,\dots,m, 
\end{align*}
with $P_k$ as in~\eqref{Pk}, admits controllable sequences of minimal length $n-r+1+\frac{m(m-1)}{2}$.
We start by constructing a controllable sequence of such length.
The first mode of the sequence may be chosen arbitrarily, since, independently of the choice, we reach the space $\mathrm{span}\{e_1,\dots,e_{r}\}$ starting from the origin. Then, applying $n-r-m+1$ times the first mode allows us to reach the space $\mathrm{span}\{e_1,\dots,e_{n-m+1}\}$. Applying sequentially the modes $2,3,\dots,m$ we end up with the space
$\mathrm{span}\{e_1,\dots,e_{n-m},e_n\}$. The controllable sequence can thus be constructed recursively by noting that, applying sequentially the modes $1,2,\dots,m-k$ starting from the space $\mathrm{span}\{e_1,\dots,e_{n-m},e_{n-k+1},\dots,e_n\}$, we reach the space $\mathrm{span}\{e_1,\dots,e_{n-m},e_{n-k},\dots,e_n\}$, for $k=1,\dots,m-1$.
The total length of the constructed controllable sequence is therefore
\begin{align*}
&1+(n-r-m+1)+(m-1)+\sum_{k=1}^{m-1}k\\
&=n-r+1+\frac{m(m-1)}{2}
\end{align*}
as required.

We next prove that the previously constructed controllable sequence is of minimal length.
For this purpose, we associate with the $i$-th element of the basis the weight 
 \begin{equation*}
 \nu_i=
 \begin{cases}
 0&\text{ if }i=1,\dots,r-1,\\
 1&\text{ if }i=r,\dots,n-m+1,\\
 i+m-n&\text{ if }i=n-m+2,\dots,n
 \end{cases}
 \end{equation*}
and define \[\nu( \mathrm{span}\{e_{i_1},\dots,e_{i_h}\}) = \sum_{j=1}^h \nu_{i_j}.\]
It is easy to check that, for every $k=1,\dots,m$, the value $\nu$ increases at most by one unit when one applies mode $k$ to a space  $ \mathrm{span}\{e_{i_1},\dots,e_{i_h}\}$ for every (possibly empty) set of distinct indices $\{i_1,\dots,i_h\}\subseteq\{1,\dots,n\}$.
Hence to reach $\mathbb{R}^n$ from the origin we need to apply sequentially at least $\nu(\mathbb{R}^n)$ modes, and we have
\begin{align*}
\nu(\mathbb{R}^n) &= n-m-r+2 + \sum_{i=n-m+2}^n (i+m-n) \\
& = n-r+1 +\frac{m(m-1)}{2}. 
\end{align*}
This shows that the previously constructed controllable sequence has minimal length, concluding the proof of~\eqref{eq-bnrm-2}.

As a consequence of~\eqref{eq-bnrm-2}, the inequalities in~\eqref{eq-bnrm-1} become equalities when $m=n-r$. The same holds for $m>n-r$ since, starting from a switched system with $n-r$ modes, by duplicating modes, we can create a switched system with $m>n-r$ modes preserving the original shortest controllable sequences.}
\end{proof}

\begin{remark}
As in Theorem~\ref{main++2}, the invertibility of the matrices $A_i$ assumed in Theorem~\ref{th-conner} can be replaced by the relaxed condition $\mathrm{Im}(A_i)+\mathrm{Im}(B_i) = \mathbb{R}^n$ for $i=1,\dots,m$.
\end{remark}

{
\section{The degenerate case}
\label{sec:degenerate}
In the previous sections we have assumed that all the matrices $A_i$ are invertible. In Remark~\ref{rem-equivalent} we have observed that this assumption can be relaxed by assuming that all modes satisfy the condition
\begin{equation}\label{relax}
\mathrm{Im}(A_i)+\mathrm{Im}(B_i) = \mathbb{R}^n,
\end{equation}
and that, in order to guarantee the controllability of the system or, equivalently, the existence of a controllable sequence, at least one mode must satisfy~\eqref{relax}. In the case in which the switched system is controllable but not all modes satisfy~\eqref{relax} (for simplicity, in the following we will refer to this situation as the \emph{degenerate case}), the proof scheme that we have adopted in this paper does not directly apply. In particular it is unclear if, in the degenerate case, one can find examples such that the upper bound $\frac{(n+1)n}{2}$ established in Theorem~\ref{main++2} on the length of shortest controllable sequences is violated. Also, to the best of the authors knowledge, an upper bound in the degenerate case has been previously obtained only in the special case in which $A_i = A$ for every mode $i$: in~\cite{egerstedt2005observability} it is shown that, under this assumption, controllable sequences of length no greater than $n^2$ exist for all controllable systems.
%
%
In general, for the degenerate case it is not even known if there exists an upper bound only depending on $n$ and $m$ on the length of the shortest controllable sequences.

While the question of a general upper bound remains open and, analogously, there is no proof that the bound \eqref{eq-bnrm-1} remains valid in the degenerate case, the lower bounds in Theorem~\ref{main++2} and Theorem~\ref{th-conner} can actually be improved by appropriately modifying the examples exhibited in the corresponding proofs.
Namely, one can modify  the example  exhibited in the proof of Theorem~\ref{main++2} by setting
\begin{equation}
\label{new-Ak}
A_k=\begin{pmatrix}0_{n-m+k-2} & 0 & 0\\ 0 & P_{2} & 0\\ 0 & 0 & \mathrm{Id}_{m-k}\end{pmatrix}, 
\end{equation}
 for $k=2,\dots,m$, keeping $A_1$ and the matrices $B_k$ unchanged. 
 Similarly to the aforementioned proof,  we can associate with the $i$-th element of the basis the weight 
 \begin{equation*}
 \nu_i=
 \begin{cases}
 1&\text{ if }i=1,\dots,n-m+1,\\
 i&\text{ if }i=n-m+2,\dots,n
 \end{cases}
 \end{equation*}
and consider the function 
\[\nu( \mathrm{span}\{e_{i_1},\dots,e_{i_h}\}) = \sum_{j=1}^h \nu_{i_j}.\]
It is easy to see that there exists a (unique) switching sequence along which the value $\nu$ increases by exactly one unit at each time step until the reached space coincides with $\mathbb{R}^n$. Hence the system is controllable and the minimal length of controllable sequences is no greater than
\begin{align*}
\nu( \mathbb{R}^n) &= n-m+1+\sum_{i=n-m+2}^n i\\
&=n-m+1+(n-m+1)(m-1)+\sum_{i=1}^{m-1} i\\
&=(n-m+1)m+\frac{m(m-1)}{2}.
\end{align*}
Furthermore, the value $\nu$ increases at most by one unit at each time step along any controllable sequence of minimal length. Indeed, it is easy to see that  $\nu(\mathcal{R}(\pi\sigma))\leq  \nu(\mathcal{R}(\pi))+1$ for every $\pi\in\Sigma^*$ and $\sigma\in \Sigma\setminus\{2\}$. If $e_{n-m+1}\notin\mathcal{R}(\pi)$ then $\nu(\mathcal{R}(\pi 2))\leq  \nu(\mathcal{R}(\pi))+1$, otherwise $\pi$ must be of the form $\pi=\bar\pi 1$ or $\pi=\bar\pi 2$ for some $\bar\pi\in\Sigma^*$. In the first case by an inductive argument one shows that $\mathrm{span}\{e_1,\dots,e_{n-m+1}\} \subseteq\mathcal{R}(\pi)$, so that $\nu(\mathcal{R}(\pi 2)) =  \nu(\mathcal{R}(\pi))+1$, while in the second case $\mathcal{R}(\pi 2) = \mathcal{R}(\bar \pi 2 2) \subseteq \mathcal{R}(\bar\pi)$, so that $\pi 2$  cannot be a subsequence of a controllable switching sequence of minimal length.
Hence, the minimal length of controllable sequences is exactly $(n-m+1)m+\frac{m(m-1)}{2}$. In the degenerate case, this strictly increases the lower bound $n+\frac{m(m-1)}{2}$ of Theorem~\ref{main++2} whenever $n>m>1$.

Similarly, concerning Theorem~\ref{th-conner}, one can consider a modification of the example  exhibited in its proof by replacing the matrices $A_k$ with the matrices in~\eqref{new-Ak} for $k=2,\dots,m$.
Proceeding as before and setting 
  \begin{equation*}
 \nu_i=
 \begin{cases}
 0&\text{ if }i=1,\dots,r-1,\\
1&\text{ if }i=r,\dots,n-m+1,\\
 i-r&\text{ if }i=n-m+2,\dots,n
 \end{cases}
 \end{equation*}
and 
\[\nu( \mathrm{span}\{e_{i_1},\dots,e_{i_h}\}) = \sum_{j=1}^h \nu_{i_j}\]
we can show the existence of controllable switching sequences of minimal length
\begin{align*}
\nu( &\mathbb{R}^n) = n-m-r+2+\sum_{i=n-m+2}^n (i-r)\\
&=n-m-r+2+(n-m-r+1)(m-1)+\sum_{i=1}^{m-1} i\\
&=\frac{m(2n-m-2r+1)}{2}+1.
\end{align*}
In the degenerate case, this strictly increases the lower bound of Theorem~\ref{th-conner} whenever $n-r>m>1$.
}

\section{Conclusion}
\label{sec:conclusion}
We have provided a Kalman-type controllability criterion and estimates on the length of shortest controllable switching sequences for discrete-time switched linear control systems. The estimates that we have obtained are tight in some relevant cases, but numerical examples show that they could be improved when the number of modes is small compared to the dimension of the system. Furthermore, the case in which the modes $A_i$ are degenerate is only partially treated here, and even the existence of an upper bound (depending on the dimension of the system) on the length of shortest controllable sequences remains unknown in this case. This, and further related questions, will be the subject of future research. 

\section*{Acknowledgements}
The authors thank Luca Greco for fruitful discussions and for drawing their attention to Example~\ref{example}.

\bibliographystyle{plain}
\bibliography{biblio.bib}

 \end{document}